\numberwithin{equation}{section}
\def\sO{{\mathscr O}}
\newcommand{\CC}{\mathbb{C}}
\newcommand{\PP}{\mathbb{P}}
\newcommand{\RR}{\mathbb{R}}
\newcommand{\ZZ}{\mathbb{Z}}
\newcommand{\cal}{\mathcal}
\def\cC{{\cal C}}
\def\cH{{\cal H}}
\def\cL{{\cal L}}
\def\and{\quad{\rm and}\quad}
\def\lra{\longrightarrow }
\def\mapright#1{\,\smash{\mathop{\lra}\limits^{#1}}\,}
\def\mapleft#1{\,\smash{\mathop{\longleftarrow}\limits^{#1}}\,}
\let\sub=\subset
\newtheorem{prop}{Proposition}[section]
\newtheorem{theo}[prop]{Theorem}
\newtheorem{coro}[prop]{Corollary}
\newtheorem{defi}[prop]{Definition}
\newtheorem{assu}[prop]{Assumption}
\def\beq{\begin{equation}}
\def\eeq{\end{equation}}
\def\virt{^{\mathrm{vir}} }
\def\DM{Deligne-Mumford }
\def\zero{\mathrm{zero} }
\def\bl{\bigl(}
\def\br{\bigr)}
\def\redd{{\mathrm{red}}}
\def\fX{\mathfrak{X} }
\def\tE{\widetilde{E} }
\def\tX{\widetilde{X} }
\def\bp{\mathbf{p}} 
\def\bq{\mathbf{q}}
\def\tX{\widetilde{X} }
\def\tS{\widetilde{S}}
\def\tE{\widetilde{E}}
\def\cC{\mathcal{C}}
\def\zero{\mathrm{zero}}
\def\tsig{\tilde{\sigma}}
\def\tY{\widetilde{Y}}
\def\DM{Deligne-Mumford }
\def\ih{I\!H}
\def\ulambda{{\lambda}}
\def\ulog{^{\mathrm{log}}}
\def\urig{^{\mathrm{rig}}}
\def\ugamma{{\underline{\gamma}}}
\def\rmgn{\overline{M}_{g,n}}
\def\fZ{{\mathfrak{Z}}}
\def\tfZ{{\widetilde{\fZ}}}
\def\lloc{_{\mathrm{loc}}}
\title{Algebraic virtual cycles for quantum singularity theories}
\date{2018.10.30.}
\author{Huai-Liang Chang}
\address{Department of Mathematics, Hong Kong University of Science and Technology, Hong Kong}
\email{mahlchang@ust.hk}
\author{Young-Hoon Kiem}
\address{Department of Mathematics and Research Institute
of Mathematics, Seoul National University, Seoul 08826, Korea}
\email{kiem@snu.ac.kr}
\author{Jun Li}
\address{Department of Mathematics,  Stanford University, CA 94305, USA}
\email{jli@stanford.edu}
\thanks{YHK was partially supported by Samsung Science and Technology Foundation SSTF-BA1601-01; JL was partially supported by NSF grants DMS-1564500 and DMS-1601211.}
\keywords{virtual cycle, FJRW invariant, cohomological field theory, cosection localization.}
\begin{document}
\begin{abstract} We construct algebraic virtual cycles that give us the cohomological field theories of Fan-Jarvis-Ruan invariants by integral transformations.\end{abstract}
\maketitle

\section{Introduction}\label{S1}

In this paper, we construct an algebraic virtual cycle that provides us with the cohomological field theory of Fan-Jarvis-Ruan-Witten (FJRW for short) invariants in \cite{FJR} by a Fourier-Mukai type integral transformation. 

\subsection{Background and motivation}
Let $w:\CC^N\to \CC$ be a nondegenerate quasi-homogeneous polynomial (cf. \S\ref{S5.1}) which defines a nonsingular hypersurface $Q_w=\PP w^{-1}(0)$. Let $\hat{G}$ be a subgroup of $(\CC^*)^N$ and $\chi:\hat{G}\to \CC^*$ be a homomorphism such that $w(g\cdot x)=\chi(g) w(x)$. The kernel of $\chi$ is a finite group denoted by $G$. If we let $\hat{G}$ act on $\CC^N\times \CC$ by
 $g\cdot (x,t)=(g\cdot x,\chi(g)^{-1}t)$, the quotient stack 
$$\fX=[(\CC^N\times \CC)/\hat{G}]$$
admits a function $\mathbf{w}(x,t)=t\cdot w(x)$ and two GIT quotients
$$\fX_+=\bl(\CC^N-0)\times\CC\br/\hat{G},\quad \fX_-=\bl\CC^N\times (\CC-0)\br/\hat{G}=\CC^N/G.$$ 
The former $\fX_+$ is an (orbi-)line bundle over the weighted projective space
$\PP^{N-1}$ and the critical locus of $\mathbf{w}|_{\fX_+}$ is $Q_w$, up to quotient by a finite group $G/\mu_d$. 
On the other hand, $\mathbf{w}|_{\fX_-}=w$.  

In \cite{Witt}, Witten conjectured that the Gromov-Witten invariants of $Q_w$ 
should be computable by the Landau-Ginzburg (LG for short) model 
$$w:\CC^N/G\lra \CC$$
whose curve counting invariants 
%Witten further conjectured that the curve couting for $w:\CC^N/G\to \CC$ 
should be integrals on the solution space of Witten's equation on the moduli space $X$ 
of $G$-spin curves $(C,p_j, L_i, \varphi)$ (cf. \S\ref{S5.2})
together with sections $(x_i)\in \prod_{i=1}^NH^0(L_i)$.

Through analysis, Fan, Jarvis and Ruan in \cite{FJR} studied the solution space of Witten's equation and defined the 
FJRW invariants which were proved to satisfy nice properties like the splitting axioms, codified as cohomological field theories.  Quantum singularity theories in the title refer to cohomological field theories arising from singularities like $w^{-1}(0)$. 

Slightly later, Polishchuk and Vaintrob in \cite{PV} provided a purely algebraic construction of cohomological field theories of the LG model $w:\CC^N/G\to \CC$ by matrix factorizations.
 They constructed a universal matrix factorization and their cohomological field theories are obtained by
 Fourier-Mukai type transformations on matrix factorizations and Hochschild homology. 
As the functors of matrix factorizations do not preserve the ordinary cohomology degrees, the algebraic theory in \cite{PV} lacks in explicit interpretation in terms of cycles and basic properties like 
the homogeneity of dimension are not obvious. 

An algebraic theory for FJRW invariants by algebraic cycles was provided in \cite{CLL} 
for narrow sectors by constructing the virtual fundamental cycle for the moduli space $X$ 
where Witten's equation is replaced by the cosection localization principle (cf. \cite{KLc}).
%\red{(For narrow sectors, see Theorem \ref{a20}.)}
%Here, narrow sectors mean that  the stabilizer of any marking $p_j$ acting on the fiber $\oplus_i L_i|_{p_j}$ 
%has only one fixed point $0$.} 
For the general case including broad sectors,  the second and third named authors in \cite{KLq}  
generalized the cosection localization of \cite{KLc} to intersection homology and 
provided a direct construction of the cohomological field theories for both broad and narrow sectors. 
As the construction in \cite{KLq} does not involve virtual cycles,
%cohomological field theories were directly constructed in \cite{KLq} without virtual cycles, 
one may wonder whether it is possible to construct the cohomological field theories by a Fourier-Mukai type integral transformation whose kernel is an algebraic virtual cycle. 

The goal of this paper is to construct algebraic virtual cycles that give us the cohomological field theories of \cite{KLq} by integral transformations.

\subsection{Construction of virtual cycles by blowups}

The moduli stack $X$ of rigidified $G$-spin curves with sections 
can be written as the zero locus of a section $s$ of a vector bundle $E$ 
over a \DM stack $Y$ (cf. \S\ref{S5.3}). 
We also have a cosection
$\sigma:E\to \sO_Y$ satisfying $\sigma\circ s=0$ and 
a smooth morphism $\bq:Y\to Z=\underline{w}^{-1}(0)$, where 
$\underline{w}$ is a nondegenerate quasi-homogeneous polynomial on a finite dimensional vector space.
As $X$ is usually not proper, the ordinary virtual fundamental class $[X]\virt$, as a Chow cycle supported in $X$, 
cannot be used for an integral transformation. 
On the other hand, the intersection $S$ of $X$ and the degeneracy locus $\fZ$ 
(sometimes called the zero locus) of 
$\sigma$ is the moduli space of rigidified $G$-spin curves; hence $S=X\cap \fZ$ is proper. 

A simple observation shows that when $Y$ is smooth, the cosection $\sigma$ descends 
to a cosection of the obstruction sheaf $Ob_X=\mathrm{coker}(ds)$ of the perfect obstruction theory $[T_Y|_X\mapright{ds} E|_X]$.
In the narrow case, $Z=0$ and one can apply the cosection localization principle in \cite[Theorem 5.1]{KLc}
to obtain the cosection localized virtual cycle 
$$[X]\virt\lloc\in A_*(S)$$
which gives us the FJRW invariants in the narrow case (cf. \cite{CLL}). 

In the broad case, $Y$ is singular and $\sigma$ does not descend to $Ob_X$. 
In order to apply the cosection localization in \cite{KLc}, 
we will replace $Z$ by its blowup $Z'$ at the isolated singular point $0$, and
pull back all the data above to $Z'$ to get a smooth morphism $\bq':Y'\to Z'$, 
a vector bundle $E'$ over $Y'$, a section $s'$ of $E'$ which defines $X'$, and 
a cosection $\sigma'$ of $E'$ whose zero locus $\fZ'$ intersects with $X'$ along the proper $S'=Z'\times_ZS$. 
Since $Z'$ is smooth and $\sigma'\circ s'=0$, $Y'$ is smooth and the cosection $\sigma'$ descends
to a cosection of the obstruction sheaf $Ob_{X'}=\mathrm{coker}(ds')$.
Applying \cite[Theorem 5.1]{KLc}, we obtain a cosection localized virtual cycle
\[ [X']\virt\lloc\in A_*(S').\]

The proper morphism $p$ and the composite $q$ below
$$p:S'\lra S,\qquad q:S'\hookrightarrow Y'\mapright{\bq'} Z',
$$
where the former is induced from the blowup morphism $Z'\to Z$,
%and the composition $q:S'\hookrightarrow Y'\mapright{\bq'} Z'$ 
together with the virtual cycle $[X']\virt\lloc$, give rise to an integral transformation
\beq\label{g52}
\Phi_{[X']\virt\lloc}:H^*(Z') \lra H_*(S),\quad \alpha\mapsto p_*([X']\virt\lloc\cap q^*\alpha).\eeq
The insertion space $\cH^{\otimes n}$ of the FJRW theory (cf. \S\ref{S5.1}) is contained in the direct sum of spaces of the form 
\[ \ih_m(Z)\subset H^{m-2}(Z')\cong H_m(Z')\]
and we have a proper pushforward $H_*(S)\to H_*(\rmgn)$. 
Hence \eqref{g52} enables us to define homomorphisms
\beq\label{g54} \Omega'_{g,n}:\cH^{\otimes n}\lra H_*(\rmgn)\cong H^*(\rmgn).\eeq
 
 In \cite[Theorem 3.2]{KLq}, the second and third named authors constructed the cosection localized Gysin maps for intersection homology
 \beq\label{g55} s^!_\sigma:\ih_*(Y)\lra H_*(S).\eeq
Composed with the pullback $\bq^*:\ih_*(Z)\to \ih_*(Y)$, \eqref{g55} also enables us to define homomorphisms
\beq\label{g56} \Omega_{g,n}:\cH^{\otimes n}\lra H_*(\rmgn)\cong H^*(\rmgn).\eeq
In \cite[Theorem 4.5]{KLq}, it was proved that \eqref{g56} satisfies the axioms of the FJRW cohomological field theory. 

The goal of this paper is to prove that 
\[ \Omega'_{g,n}=\Omega_{g,n} ,
\]
and hence the integral transformation \eqref{g52} by the algebraic virtual cycle $[X']\virt\lloc$ provides us with the FJRW cohomological field theory \eqref{g54} (cf. Theorem \ref{g30}).

\subsection{Layout} 

In \S\ref{S5}, we recall the spin curves and their moduli space. In \S\ref{S2}, we construct virtual cycles by blowup and define an integral transformation whose kernel is the virtual cycle. In \S\ref{S3}, we recall the cohomological field theory construction by intersection homology in \cite{KLq}.
In \S\ref{S4}, we prove the main theorem about the equality of the two cohomological field theories constructed in the previous sections.

\subsection{Notation and convention}

All varieties, schemes and stacks are defined over $\CC$ in this paper. We will use only the classical topology of algebraic varieties and schemes.
All the topological spaces in this paper are locally compact Hausdorff countable CW complexes.  
Intersection homology in this paper refers to the middle perversity intersection homology unless stated otherwise. The Borel-Moore homology groups are denoted by $H_*(-)$. We will not use the ordinary homology groups.  
All the cohomology groups in this paper have complex coefficients. 
The fundamental class of an irreducible closed substack $V$ of a \DM stack $Y$  in the Chow group $A_*(Y)$ is denoted by 
$[\![V]\!]$ while the fundamental class of $V$ in the Borel-Moore homology group $H_*(Y)$ is denoted by $[V]$.

\section{Spin curves and curve counting}\label{S5}

In this section, we recall the Fan-Jarvis-Ruan-Witten theory from \cite{FJR}. Our presentation follows \cite[\S4]{KLq}.

\subsection{Hypersurface singularities}\label{S5.1}

A polynomial $w:\CC^N\to \CC$ is quasi-homogeneous if for some $d_1,\cdots,d_N, d\in \ZZ_{>0}$,
\beq\label{d7}
w(t^{d_1}x_1,\cdots,t^{d_N}x_N)=t^d \cdot w(x_1,\cdots,x_N).
\eeq %\quad \exists d_i, d\in \ZZ_{>0}.\eeq
Here we assume that $d>0$ is the minimal possible. Let $q_i=d_i/d$. 
The quasi-homogeneous polynomial $w$ is \emph{nondegenerate} if the following are satisfied: 
\begin{enumerate}
\item no mominial of $w$ is of the form $x_ix_j$ for $i\ne j$;
\item the projective hypersurface $Q_w$ defined by $w$ is nonsingular:
$$Q_w=\PP w^{-1}(0)\subset \PP^{N-1}_{d_1,\cdots,d_N}.$$
\end{enumerate} 
By (2), the hypersurface $w^{-1}(0)\subset \CC^N$ has singularity only at the origin $0$ and $q_i\le \frac12$. 

We write $w=\sum_{k=1}^\nu c_kw_k$, where $c_k\in \CC^*$ and $w_k$ are distinct monomials. 
The kernel of the homomorphism
\beq\label{d25} (w_1,\cdots, w_\nu): (\CC^*)^N\lra (\CC^*)^\nu \eeq
is the symmetry group
\beq\label{d8} G_w=\{(\lambda_1\cdots,\lambda_N)\in (\CC^*)^N\,|\,w(\lambda_1 x_1,\cdots,\lambda_N x_N)=w(x_1,\cdots,x_N)\}\eeq
of $w$, which is finite by the nondegeneracy. 
Let 
$$J_w=(e^{2\pi iq_1},\cdots, e^{2\pi iq_N})\in G_w,
$$ 
and fix a subgroup $G$ of $G_w$ containing $J_w$. The pair $(w,G)$ is the input data for the FJRW theory in \cite{FJR}.

Consider the the diagonal embedding $\CC^*\to (\CC^*)^\nu$ and the fiber product 
\[\xymatrix{
\hat{G}_w\ar[r] \ar[d] &\CC^*\ar[d]\\
(\CC^*)^N\ar[r] & (\CC^*)^\nu
}\]
of \eqref{d25}. 
By the quasi-homogeneity \eqref{d7}, the homomorphism $$\CC^*\to  (\CC^*)^N,\quad t\mapsto (t^{d_1},\cdots,t^{d_N})$$
factors through $\hat{G}_w$, which together with the inclusion $G_w\to \hat{G}_w$  
gives us a surjective homomorphism 
$G_w\times \CC^*\to \hat{G}_w$ whose kernel is $\mu_d\le \CC^*$, the group of $d$-th roots of unity.
The subgroup $G$ of $G_w$ thus determines  
$$\hat{G}=G\times\CC^*/\mu_d \subset\hat{G}_w$$ that fits into an exact sequence 
\beq\label{d26} 1\lra G \lra \hat{G} \mapright{\chi} \CC^*\lra 1.\eeq
Since $G$ acts trivially on $w$, for ${\lambda}\in \hat{G}\subset (\CC^*)^N$, 
\beq\label{d31} w(\ulambda\cdot x)=\chi(\ulambda)w(x).\eeq 
%In particular, $w(\ulambda\cdot x)=-w(x)$ if and only if $\ulambda\in\chi^{-1}(-1)$.

The \emph{state space} for the singularity $(w,G)$ in \cite{FJR} is 
\beq\label{d9} \cH=\bigoplus_{\gamma\in G}\cH_\gamma,\quad \cH_\gamma=H^{N_\gamma}(\CC^{N_\gamma},w_\gamma^\infty)^G.
\eeq
Here $\CC^{N_\gamma}$ is the $\gamma$-fixed subspace of $\CC^N$, and $w_\gamma^\infty=(\mathrm{Re}(w_\gamma))^{-1}(a,\infty)$, where $w_\gamma=w|_{\CC^{N_\gamma}}$ and $a>\!> 0$. 
%When $\gamma=(e^{2\pi iq_1},\cdots, e^{2\pi iq_N})$,  $\cH_\gamma=\CC$ since $N_\gamma=0$. The constant function $1$ in $\cH_{(e^{2\pi iq_1},\cdots, e^{2\pi iq_N})}$ is denoted by $\mathbf{1}$.

If we let 
$$Z'_\gamma\lra Z_\gamma=w_\gamma^{-1}(0)
$$ 
be the weighted blowup at the origin, then $Z'_\gamma$ is the line bundle $\sO_{Q_{w_\gamma}}(-1)$ which is the restriction of $\sO_{\PP^{N_\gamma-1}_{d_1,\cdots,d_{N_\gamma}}}(-1)$ to $Q_{w_\gamma}=\PP w_\gamma^{-1}(0)\subset \PP^{N_\gamma-1}_{d_1,\cdots,d_{N_\gamma}}$. As $Q_{w_\gamma}$ is smooth by \cite[Lemma 2.1.10]{FJR}, so is $Z'_\gamma$.

We consider the vanishing cohomology $H^{*}_{\mathrm{van}}(\PP w_\gamma^{-1}(0))$ and the primitive cohomology 
$H^{*}_{\mathrm{prim}}(\PP w_\gamma^{-1}(0))$ of $Q_w=\PP w_\gamma^{-1}(0).$
Complex Morse theory then provides us with isomorphisms
$$H^{N_\gamma}(\CC^{N_\gamma},w_\gamma^\infty)\cong H^{N_\gamma-2}_{\mathrm{van}}(\PP w_\gamma^{-1}(0))\cong H^{N_\gamma-2}_{\mathrm{prim}}(\PP w_\gamma^{-1}(0)),
$$ 
by \cite[Proposition 2.27]{Voisin} 
since the weighted projective space has no primitive cohomology in non-zero degrees. 
On the other hand, the middle perversity intersection homology of $w_\gamma^{-1}(0)$ satisfies
(cf. \cite[p.20]{Borel})
\beq\label{d11}\ih_i(w_\gamma^{-1}(0))=\begin{cases}
H^{N_\gamma-2}_{\mathrm{prim}}(\PP w_\gamma^{-1}(0)), & i=N_\gamma\\
\CC, & i=2N_\gamma-2 \\
0, & \text{otherwise}.
\end{cases}
\eeq % i\ne N_\gamma,\ 2N_\gamma-2. \end{matrix}  \right.\eeq
Hence we have 
%\beq\label{d12}
$$\cH_\gamma=\ih_{N_\gamma}(w_\gamma^{-1}(0))^G
\subset H^{N_\gamma-2}_{\mathrm{prim}}(\PP w_\gamma^{-1}(0))\subset H^{N_\gamma-2}(\PP w_\gamma^{-1}(0))
=H^{N_\gamma-2}(Z'_\gamma).
$$
%where $Z'_\gamma$ is the (weighted) blowup of $Z_\gamma=w_\gamma^{-1}(0)$ at the origin.

For $\gamma_1,\gamma_2\in G$, the Thom-Sebastiani sum 
$$w_{\gamma_1}\boxplus w_{\gamma_2}: \CC^{N_{\gamma_1}}\oplus \CC^{N_{\gamma_2}}\to \CC
$$ %of $w_{\gamma_1}$ and $w_{\gamma_2}$ 
is defined by $(x,y)\mapsto w_{\gamma_1}(x)+w_{\gamma_2}(y).$  
By \cite{Massey}, we have canonical isomorphisms and a commutative square
%\beq\label{d17} 
$$\xymatrix{
H^{N_{\gamma_1}-2}_{\mathrm{van}}(\PP w_{\gamma_1}^{-1}(0))\otimes H^{N_{\gamma_2}-2}_{\mathrm{van}}(\PP w_{\gamma_2}^{-1}(0))\ar[d]^\cong \ar[r]^\cong & H^{N_{\gamma_1}+N_{\gamma_2}-2}_{\mathrm{van}}(\PP (w_{\gamma_1}\boxplus w_{\gamma_2})^{-1}(0))\ar[d]^\cong\\
\ih_{N_{\gamma_1}}(w_{\gamma_1}^{-1}(0))\otimes \ih_{N_{\gamma_2}}(w_{\gamma_2}^{-1}(0))\ar[r]^\cong & \ih_{N_{\gamma_1}+N_{\gamma_2}}((w_{\gamma_1}\boxplus w_{\gamma_2})^{-1}(0)).
}
$$
Therefore, $\cH_{\gamma_1} \otimes \cH_{\gamma_2}$ is canonically isomorphic to
\beq\label{d19}  \ih_{N_{\gamma_1}+N_{\gamma_2}}((w_{\gamma_1}\boxplus w_{\gamma_2})^{-1}(0))^{G\times G}\cong H^{N_{\gamma_1}+N_{\gamma_2}-2}_{\mathrm{van}}(\PP (w_{\gamma_1}\boxplus w_{\gamma_2})^{-1}(0))^{G\times G}.\eeq

\subsection{Moduli of spin curves}\label{S5.2}

Given the input data $(w,G)$, we have the moduli stack of spin curves.  

A pointed \emph{twisted curve} refers to a proper \DM stack $C$ with smooth substacks $p_1,\cdots, p_n\subset C$, such that 
\begin{enumerate}
\item denoting the coarse moduli space by $\rho:C\to |C|$, $|C|$ is a projective curve which has at worst nodal singularities and the markings $\rho(p_i)=|p_j|$ are smooth points of $|C|$;
\item $\rho$ is an isomorphism away from special points (nodes or makings);
\item a marking is locally $\CC/\mu_{l}$ for some $l>0$, where $\mu_l$ is the group of $l$-th roots of unity; 
\item a node is locally $\{xy=0\}/\mu_l$ for some $l>0$, where $\mu_l$ acts via $(x,y)^z=(zx,z^{-1}y)$.
\end{enumerate}
The log dualizing sheaf of $C$ is the pullback $$\omega_C\ulog=\rho^*\omega_{|C|}\ulog=\rho^*\omega_{|C|}(|p_1|+\cdots+|p_n|).$$

A \emph{$G$-spin curve} is a principal $\hat{G}$-bundle $P$ on a pointed twisted curve $(C,p_1,\cdots, p_n)$  equipped with an isomorphism
\[ \varphi:\chi_*P\cong P(\omega_C\ulog)\] 
of principal $\CC^*$-bundles. Here $P(\omega_C\ulog)$ is the principal  $\CC^*$-bundle associated to the line bundle 
$\omega_C\ulog$; $\chi$ is as in \eqref{d26} and $\chi_*P$ is the principal $\CC^*$-bundle obtained by applying $\chi$ to the fibers of $P$. 
Applying the inclusion map $\hat{G}\to (\CC^*)^N$ to $P$, we obtain a  principal $(\CC^*)^N$-bundle $P\times_{\hat{G}}(\CC^*)^N$ over $C$ which gives us line bundles $(L_1,\cdots, L_N)$. 
The stabilizer group $G_{p_j}$ of a marking $p_j$ acts on the fiber $\oplus_i L_i|_{p_j}$ by 
$\gamma_j=(\gamma_{ij})_{1\le i\le N}\in G$. We let $\ugamma=(\gamma_1,\cdots,\gamma_n)\in G^n$ and call it the type of the $G$-spin curve. 
The stabilizer group $G_p$ of a node $p$ acts on $\oplus_i L_i|_p$ by a $\gamma_p\in G$. 
  
The spin curve $(C,p_j, L_i,\varphi)$ is \emph{stable} if $(|C|,|p_1|,\cdots, |p_n|)$ 
is a stable curve, and the homomorphism from the stabilizer group of a marking $p_j$ (resp. a node $p$) into $G$ that sends the generator to $\gamma_j$ (resp. $\gamma_p$) is injective.

\begin{theo}\label{d29}\cite[Theorem 2.2.6]{FJR} \cite[Proposition 3.2.6]{PV}
The stack $S_{g,n}$ of stable $G$-spin curves is a smooth proper \DM stack with projective coarse moduli.
The forgetful morphism $S_{g,n}\to \overline{M}_{g,n}$ sending a $G$-spin curve $(C,p_j,L_i,\varphi)$ to the underlying stable curve $(|C|, |p_j|)$ is flat proper and quasi-finite.  
\end{theo}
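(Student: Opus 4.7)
My plan is to build $S_{g,n}$ as a finite étale cover of a root stack over the moduli of pointed twisted curves, and then to read off its properties stratum by stratum. The starting point is the smooth proper Deligne--Mumford stack $\mathfrak{M}^{\mathrm{tw}}_{g,n}$ of pointed stable twisted curves with fixed stabilizer orders at special points (due to Abramovich--Vistoli and Olsson), which is flat, proper, and quasi-finite over $\overline{M}_{g,n}$. Since the monodromy type $\underline{\gamma}\in G^n$ at the markings and the monodromy elements $\gamma_p\in G$ at the nodes are locally constant on $S_{g,n}$, I would decompose $S_{g,n}=\coprod_{\underline{\gamma},\{\gamma_p\}} S_{g,n}^{\underline{\gamma},\{\gamma_p\}}$; fixing this discrete data also pins down the stabilizer orders at special points, so each stratum lives over a well-defined component of $\mathfrak{M}^{\mathrm{tw}}_{g,n}$.

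On a fixed pointed twisted curve $(C,p_j)$, a $G$-spin structure is a principal $\hat{G}$-bundle $P$ equipped with $\varphi:\chi_*P\cong P(\omega_C\ulog)$. Using the exact sequence $1\to G\to \hat{G}\xrightarrow{\chi}\mathbb{C}^*\to 1$ with $G$ finite, the pair $(P,\varphi)$ decomposes into (a) a choice of $d$-th root of $\omega_C\ulog$ in the $\mathbb{C}^*$-Picard stack of $C$, since on the quotient $\hat{G}/G=\mathbb{C}^*$ the map $\chi$ is (up to $\mu_d$) the $d$-th power map, and (b) a finite étale $G$-torsor of additional lifts. This identifies each stratum $S_{g,n}^{\underline{\gamma},\{\gamma_p\}}$ with a finite étale cover of a root stack of $\omega_C\ulog$, which is the classical setup of Cornalba--Jarvis--Chiodo and yields a smooth Deligne--Mumford stack that is quasi-finite over $\mathfrak{M}^{\mathrm{tw}}_{g,n}$. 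Smoothness is then a deformation-theoretic check: infinitesimal deformations of $(C,p_j,L_i,\varphi)$ are governed by $H^1$-groups on the curve and obstructions by $H^2$-groups that vanish for dimensional reasons once one works over the smooth base $\mathfrak{M}^{\mathrm{tw}}_{g,n}$.

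Properness would be verified by the valuative criterion: given a $G$-spin structure on the generic fiber of a family of twisted curves over a DVR, a finite base change extends it, using finiteness of $d$-th roots and the injectivity of stabilizer representations built into the stability condition to rule out unbounded automorphism growth. Projectivity of the coarse moduli space follows by pulling back an ample line bundle from $\overline{M}_{g,n}$ through the finite morphism on coarse spaces. Finally, the forgetful map $S_{g,n}\to \overline{M}_{g,n}$ factors as $S_{g,n}\to \mathfrak{M}^{\mathrm{tw}}_{g,n}\to \overline{M}_{g,n}$, each factor being flat, proper, and quasi-finite, so the composite inherits all three properties. The step I expect to be the main obstacle is flatness of $S_{g,n}\to \mathfrak{M}^{\mathrm{tw}}_{g,n}$ near the boundary: the $d$-th root stack carries a nontrivial gerbe structure at nodes, and to invoke miracle flatness one needs pure-dimensionality of the fibers, which requires careful bookkeeping of the node monodromies $\{\gamma_p\}$ and their compatibility with the local trivializations of the $\hat{G}$-bundle.
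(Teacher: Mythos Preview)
The paper does not prove this theorem: it is stated with citations to \cite[Theorem 2.2.6]{FJR} and \cite[Proposition 3.2.6]{PV} and no argument is given. So there is no ``paper's own proof'' to compare against; you are supplying an original sketch where the authors simply defer to the literature.

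That said, your sketch has a genuine gap. You propose to decompose $S_{g,n}$ as a disjoint union over the data $(\underline{\gamma},\{\gamma_p\})$, claiming that both the marking monodromies $\underline{\gamma}$ and the node monodromies $\{\gamma_p\}$ are locally constant on $S_{g,n}$. The first is fine, but the second is not: the set of nodes is not locally constant in families, since nodes can be smoothed. A point in the interior of $S_{g,n}$ has no nodes at all, while nearby boundary points do, so the symbol ``$\{\gamma_p\}$'' does not define a locally constant function and your coproduct does not make sense as a stratification of the whole stack. This undercuts the logic you set up for reading off smoothness and flatness stratum by stratum and for fixing stabilizer orders in $\mathfrak{M}^{\mathrm{tw}}_{g,n}$.

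The way the cited references actually proceed is closer to what you outline \emph{after} dropping the node labels: one works over the stack of twisted curves with prescribed automorphism groups only at the markings (letting the orders at nodes vary), and shows that the relative moduli of $\hat G$-bundles with the isomorphism $\varphi$ is \'etale over this base---essentially because the obstruction and deformation spaces of a line bundle (or a torsor under a finite diagonalizable group) on a curve are controlled by $H^2$ and $H^1$ of $\sO_C$, and the $d$-th root condition is \'etale. Your identification with a root-stack construction \`a la Chiodo is the right picture, and your remarks on properness and projectivity are reasonable; the repair needed is to remove the spurious node-monodromy decomposition and instead argue uniformly over the base, using \'etaleness of the root construction rather than miracle flatness on strata.
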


A \emph{rigidification} of a $G$-spin curve at a marking $p_j$ is an isomorphism 
$$\psi_j:L_1|_{p_j}\oplus \cdots \oplus L_N|_{p_j}\mapright{\cong} \CC^N/\langle\gamma_j\rangle,
$$ 
where $\langle\gamma_j\rangle\le G$ is the subgroup generated by $\gamma_j$, such that 
$w_k\circ \psi_j=\mathrm{res}_{p_j}\circ \varphi_k|_{p_j}$ for every monomial $w_k$ of $w$. 
%Two different rigidifications $\psi=(\psi_j)$ and $\psi'=(\psi_j')$ are related by the action of $\prod_{j=1}^n G/(\gamma_j).$ 
The moduli stack $S_{g,n}\urig$ of stable $G$-spin curves with rigidification is an \'etale cover over $S_{g,n}$ and hence $S_{g,n}\urig$ is a proper smooth \DM stack. 
The moduli stack of $G$-spin curves of type $\ugamma$ with rigidification is denoted by $S\urig_{g,\ugamma}$.  So we have the disjoint union $$S\urig_{g,n}=\sqcup_\ugamma S\urig_{g,\ugamma}.$$

\subsection{Moduli of spin curves with sections}\label{S5.3} 
To simplify the notation, let $S=S\urig_{g,\ugamma}.$ 
Let $\cL_i$ be the universal line bundle over the universal curve $\pi:\cC\to S$ over $S$. 
By \cite[\S4.2]{PV}, there are a locally free resolution 
\beq\label{d34} R\pi_*(\oplus_{i=1}^N \cL_i)\cong [M\mapright{\beta} F]\eeq
and a smooth morphism 
$$\bq_M:M\lra B=B_\ugamma=\prod_{j=1}^n\CC^{N_{\gamma_j}}.$$
The Thom-Sebastiani sum  
${w}_{\ugamma} =w_{\gamma_1}\boxplus \cdots \boxplus w_{\gamma_n}$
is a polynomial function on $B=B_\ugamma$ whose zero locus is denoted by $Z=Z_\ugamma$.

Let $E_M=\bp_M^*F$ and $s_M$ be the section of $E_M$ defined by $\beta$ where $\bp_M:M\to S$ 
is the bundle projection. Then the zero locus $s_M^{-1}(0)$ of the section is the moduli space 
$$X=X\urig_{g,\ugamma}$$ of stable $G$-spin curves $(C,p_j, L_i,\varphi)$ of type $\ugamma$ 
together with rigidification $\psi$ and sections $(x_1,\cdots,x_N)\in \oplus_i H^0(L_i)$ of the 
line bundles $L_i$. 
%Then $X\urig_{g,\ugamma}=\zero(s_M)$ by construction. 
By \cite[\S4.2]{PV}, $E_M$ admits a cosection (i.e. a homomorphism to the structure sheaf) $\sigma_M:E_M\to \sO_M$, 
which satisfies 
\[ \sigma_M\circ s_M=\underline{w} \circ\bq_M\and X\cap\sigma_M^{-1}(0)_\redd=S.\]
Since the sum of residues of any meromorphic 1-form over a curve is zero, 
\beq\label{a7} X=X\urig_{g,\ugamma} \subset Y=Y_{g,\ugamma}:=Z\times_{B}M\subset M.\eeq
In summary we have the following diagram:
\beq\label{a3}
\xymatrix{
&& E_M\ar[d]\ar[r]^{\sigma_M} & \sO_M\ar@{=}[r] & M\times\CC\ar[r] & \CC\\
X\ar@{=}[r] & s^{-1}_M(0)\ar[dr]_{\bp_X}\ar@{^(->}[r] & M\ar@/^1.0pc/[u]^{s_M}\ar[urrr]_{\underline{w} \circ\bq_M}\ar[dr]^{\bq_M}\ar[d]^{\bp_M}\\
&&S&B\ar[r]^{\underline{w} } &\CC
}\eeq
By \eqref{a7}, we have a fiber diagram
\beq\label{d1}\xymatrix{
X\ar[dr]_{\bq_X}\ar@{^(->}[r]^\imath & Y\ar[r] \ar[d]^{\bq_Y} &M\ar[d]^{\bq_M}\\
&Z \ar@{^(->}[r] & B.
}\eeq
Here $\bq_Y$ is smooth as $\bq_M$ is smooth. 
The restriction of $E_M$ (resp. $\sigma_M$, resp. $s_M$) to $Y$ is denoted by 
$E$ (resp. $\sigma$, resp. $s$). By \eqref{a7},  $X=s^{-1}(0)$.

Because $Z$ has at most an isolated hypersurface singularity by our assumption on 
the quasi-homogeneous polynomial $w$, for $\dim_\CC Z=m-1\ge 2$, the affine variety $Z$ is 
normal and hence $Y$ is a normal as well. 
When $m\le 2$, we may replace $Z$ by its normalization. 
The intersection homology remains the same under normalization and all the arguments 
in this paper go through. 
Therefore for the FJRW theory, it suffices to work under the following.
\begin{assu}\label{aa}
Let $Y$ be a normal \DM stack over $\CC$. Let $s\in H^0(E)$ for a vector bundle $E$ of rank $r$ over $Y$ and let $X=s^{-1}(0)$. Let $\sigma\in H^0(E^\vee)=\mathrm{Hom}_Y(E,\sO_Y)$ be a cosection of $E$ satisfying  
\beq\label{a15} \sigma\circ s=0.\eeq
Let $\fZ=\sigma^{-1}(0)=\zero(\sigma)$ be the degeneracy locus where $\sigma$ is zero (i.e. not surjective).
We assume that $S=X\cap \fZ$ is proper and there is a smooth morphisms 
$\bq=\bq_Y:Y\to Z$ 
where $Z=w_\ugamma^{-1}(0)\subset B=\CC^m$ is the hypersurface defined by a nondegenerate quasi-homogeneous polynomial $w_{\ugamma}$. Let $g:Z'\to Z$ be the blowup of $Z$ at the origin, so that $Z'$ is smooth. We let $f:Y'=Y\times_ZZ'\to Y$ denote the pullback of $g$ by $\bq:Y\to Z$. 
\end{assu}
More precisely, $X$ (resp. $\fZ$) is the closed substack defined by the image of $s^\vee:E^\vee\to \sO_Y$ (resp. $\sigma:E\to \sO_Y$) in $\sO_Y$.

\section{Algebraic virtual cycles for the FJRW theory}\label{S2}

In this section, we construct algebraic virtual cycles for the Fan-Jarvis-Ruan-Witten theory and define Fourier-Mukai type integral transformations which will give us cohomological field theories in the subsequent sections.

We use the notation in \S\ref{S5}. 
Let $E, Y, s, \sigma$ be as in Assumption \ref{aa}. In particular, the moduli space $X=X_{g,\ugamma}\urig$ of rigidified $G$-spin curves with sections
$$(C,p_j,L_i, \varphi,\psi, x_i),
$$
where $(C,p_j,L_i, \varphi,\psi)\in S=S\urig_{g,\ugamma}$ and $x_i\in H^0(L_i)$,
is the zero locus of $s\in H^0(E)$ in $Y$. The restriction of the smooth morphism $\bq=\bq_Y$ to $X$ is denoted by  $\bq_X:X\to Z=Z_\ugamma$.

\subsection{Cosection localized virtual cycle by blowup}\label{S2.1}

By \cite[\S6]{BeFa}, there is a relative perfect obstruction theory
\beq\label{g1} \phi_{X/Z}: \mathbb{E}_{X/Z}=[E|_X^\vee\mapright{ds} \Omega_{Y/Z}|_X]\lra \mathbb{L}_{X/Z}\eeq
%\xymatrix{
%\ar[d]_{} & 
%\mathbb{T}_{X/Z} &[\, T_{Y/Z}|_X\ar[r] \ar@{=}[d] & \mathfrak{N}_{X/Y}\, ]\ar@{^(->}[d]\\ & [\, T_{Y/Z}|_X\ar[r]^{ds} & \,] }\eeq
where $\mathbb{L}_{X/Z}=\tau^{\ge -1}L_{X/Z}$ is the truncated relative cotangent complex of $\bq_X$. 
In other words, $h^0(\phi_{X/Z})$ is an isomorphism and $h^{-1}(\phi_{X/Z})$ is surjective. 
Since $\sigma\circ s=w_\ugamma\circ \bq=0$ (cf. \eqref{a3}), $\sigma\circ ds|_{T_{Y/Z}}=0$ and hence $\sigma:E\to \sO_Y$ induces a cosection
\beq\label{g4} \sigma_{X/Z}:Ob_{X/Z}=\mathrm{coker}(T_{Y/Z}|_X\mapright{ds} E|_X)\lra \sO_X \eeq
of the relative obstruction sheaf. Let $X^\circ\sub X$ be the preimage of the smooth part $Z_{\text{sm}}\sub Z$. Then an easy argument shows that the cosection \eqref{g4} descends to a cosection of the obstruction sheaf $Ob_X|_{X^\circ}$ of the induced absolute perfect obstruction theory of $X^\circ$. The desired descent fails over $X-X^\circ$.

%at the origin contains vectors in directions away from $Z=w_\ugamma^{-1}(0)$ (cf. \eqref{a3}). 

In order to obtain an absolute perfect obstruction theory with a cosection of its obstruction sheaf, we consider the blowup $g:Z'\to Z=Z_\ugamma$ at the origin and the fiber product
\beq\label{g11}\xymatrix{
S'\ar[r]^{\imath'}\ar[d]_p & X'\ar[r] \ar[d] & Y'\ar[r]^{\bq'}\ar[d]_{f}  & Z'\ar[d]^g\\
S\ar[r]^\imath & X\ar[r] & Y\ar[r]^\bq  & Z
}\eeq
so that $Y'$ (resp. $Z'$) is a smooth model of $Y$ (resp. $Z$).
We denote the pullbacks of $E, s, \sigma, \fZ$ to $Y'$ by $E', s', \sigma', \fZ$ so that $X'={\sigma'}^{-1}(0)$
and $\fZ'={\sigma'}^{-1}(0)$ while $S'=X'\cap \fZ'.$

The pullback of \eqref{g1} to $Y'$ is a relative perfect obstruction theory 
\beq\label{g2}
\phi_{X'/Z'}: \mathbb{E}_{X'/Z'}=[E'|_{X'}^\vee\mapright{ds'} 
\Omega_{Y'/Z'}|_{X'}]\lra \mathbb{L}_{X'/Z'}.\eeq
We also have the absolute perfect obstruction theory
\beq\label{g3} \phi_{X'}:
\mathbb{E}_{X'}=[E|_{X'}^\vee\mapright{ds'} \Omega_{Y'}|_{X'}]\lra \mathbb{L}_{X'}.
\eeq
As $\sigma'\circ s'=0$ on $Y'$, $\sigma'$ desends to a cosection
\beq\label{g5}
\sigma_{X'}:Ob_{X'}=\mathrm{coker}(ds':T_{Y'}|_{X'}\to E'|_{X'})\lra \sO_{X'}.
\eeq
Therefore we can apply the cosection localization principle.

\begin{theo}\label{g6} \cite[Theorem 5.1]{KLc}
Let $X'$ be a \DM stack equipped with a perfect obstruction theory $\phi_{X'}$ and a cosection $\sigma_{X'}:Ob_{X'}\to \sO_{X'}$. Then $X'$ admits a
localized virtual cycle $$[\![ X' ]\!]\virt\lloc\in A_*(S')$$ 
where $S'$ is the zero locus of $\sigma_{X'}$. Its image in $A_*(X')$ by the inclusion $S'\subset X'$ is the ordinary 
virtual fundamental class $[\![X']\!]\virt$ and $[\![X']\!]\virt\lloc$ is deformation invariant in the sense of intersection theory (cf. \cite{BeFa}).
\end{theo}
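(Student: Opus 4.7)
The plan is to realize $[\![X']\!]\virt\lloc$ as the image of the intrinsic normal cone under a cosection-refined Gysin map. Recall that by Behrend--Fantechi, the perfect obstruction theory $\phi_{X'}:\mathbb{E}_{X'}\to \mathbb{L}_{X'}$ produces a closed embedding of the intrinsic normal cone $\mathfrak{C}_{X'}$ into the vector bundle stack $\mathfrak{E}_{X'}=h^1/h^0(\mathbb{E}_{X'}^\vee)$, and the ordinary virtual cycle is $[\![X']\!]\virt=0^!_{\mathfrak{E}_{X'}}[\mathfrak{C}_{X'}]\in A_*(X')$. To localize to $S'$, one must (i) show that $\mathfrak{C}_{X'}$ is actually supported in a sub-locus forced by $\sigma_{X'}$, and (ii) construct a refined Gysin map out of that sub-locus that lands in $A_*(S')$.

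The geometric heart is step (i). Working locally where $X'$ is cut out of a smooth ambient $V$ by a section $s$ of a bundle $F$ equipped with a cosection $\sigma:F\to \sO_V$ satisfying $\sigma\circ s=0$, the key claim is that the normal cone $C_{X'/V}\subset F|_{X'}$ lies in the closed subspace
\[
F(\sigma):=\ker\bl\sigma|_{V\setminus \fZ}\br\cup F|_{X'\cap \fZ}\subset F,
\]
where $\fZ=\sigma^{-1}(0)$. The relation $\sigma(s)=0$ already places the image of $s$ in $F(\sigma)$; passing to degenerations of rescaled sections places every element of $C_{X'/V}$ in $F(\sigma)$ as well. Gluing through the cocycle data of the obstruction theory then shows that $\mathfrak{C}_{X'}$ lies in a corresponding sub-cone $\mathfrak{E}_{X'}(\sigma_{X'})\subset \mathfrak{E}_{X'}$, supported on the union of $\mathfrak{E}_{X'}|_{S'}$ with the kernel sub-bundle of $\sigma_{X'}$ over $X'\setminus S'$.

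For step (ii), I would construct a cosection-localized Gysin map
\[
0^!_{\sigma_{X'}}:A_*\bl\mathfrak{E}_{X'}(\sigma_{X'})\br\lra A_*(S').
\]
Over $X'\setminus S'$ the cosection is surjective and its kernel is a sub-bundle of rank $r-1$, so any cycle in $\mathfrak{E}_{X'}(\sigma_{X'})$ not meeting $\mathfrak{E}_{X'}|_{S'}$ is annihilated by the zero-section pullback for $\mathfrak{E}_{X'}$. Applying the deformation to the normal cone for $S'\hookrightarrow X'$ inside $\mathfrak{E}_{X'}$ then produces a canonical lift to $A_*(S')$ whose push-forward along $A_*(S')\to A_*(X')$ recovers the ordinary $0^!_{\mathfrak{E}_{X'}}$. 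Setting $[\![X']\!]\virt\lloc:=0^!_{\sigma_{X'}}[\mathfrak{C}_{X'}]\in A_*(S')$ therefore maps to $[\![X']\!]\virt$ in $A_*(X')$ as required.

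Deformation invariance, the final assertion, is obtained by running the same construction over a one-parameter base: the intrinsic normal cone and the cosection both specialize compatibly, the support statement of step (i) is preserved under flat base change, and so $0^!_{\sigma_{X'}}$ commutes with specialization. The principal obstacle is the support statement in step (i): verifying it for the intrinsic cone rather than for a single local presentation requires showing that the local sub-loci $F(\sigma)$ glue along the cocycle of the obstruction theory. This is delicate because $F(\sigma)$ is not a sub-bundle but only a closed subscheme, so one must check that every transition map intertwines the cosections and carries $F(\sigma)$ to $F'(\sigma')$ coherently on overlaps.
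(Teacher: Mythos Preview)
The paper does not prove this theorem; it is quoted from \cite{KLc}, and immediately after the statement the paper only \emph{recalls} the construction in the concrete situation of Assumption~\ref{aa}: the normal cone $C_{X'/Y'}\subset E'|_{X'}$ lands in $E'|_{X'}(\sigma_{X'})$, and the localized Gysin map $0^!_{E'|_{X'},\sigma_{X'}}$ is built by blowing up $Y'$ along the ideal $\sigma'(E')$, splitting a cycle as $\xi'=\rho_*\zeta'+\imath'_*\eta'$, and applying the explicit formula \eqref{g10}. Your step~(i) matches this recall exactly, and your identification of the gluing of the local loci $F(\sigma)$ as the delicate point is accurate.

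Your step~(ii), however, is not a construction. Saying that a cycle supported in $\ker(\sigma_{X'})$ over $X'\setminus S'$ is killed by $0^!_{\mathfrak{E}_{X'}}$ in $A_*(X')$ is correct, but this vanishing does not by itself produce a canonical preimage in $A_*(S')$; the long exact sequence $A_*(S')\to A_*(X')\to A_*(X'\setminus S')$ only tells you a lift exists, not which one to take, and ``deformation to the normal cone for $S'\hookrightarrow X'$'' does not single one out. The actual mechanism, both in \cite{KLc} and in the paper's summary, is different: one passes to a $\sigma$-regularizing morphism $\rho:\tY'\to Y'$ (Definition~\ref{a18}) so that the cosection becomes a surjection onto a line bundle $\sO_{\tY'}(-\tfZ')$ with kernel a genuine sub-bundle $F'$, then writes the localized Gysin map as in \eqref{g10}, namely $-{\rho_S}_*(\tS'\cdot 0^!_{F'}(\zeta'))+0^!_{E'|_{S'}}(\eta')$. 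The nontrivial content is that this is independent of the decomposition $(\zeta',\eta')$ and of the choice of regularizing morphism. Without this (or an equivalent explicit recipe), you have not defined $0^!_{\sigma_{X'}}$, and hence not defined $[\![X']\!]\virt\lloc$.
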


Under Assumption \ref{aa}, the construction of $[\![X']\!]\virt\lloc$ goes as follows: 
By \cite[Proposition 4.3]{KLc}, the normal cone $C_{X'/Y'}\subset E'|_{X'}$ has support in
$$E'|_{X'}(\sigma_{X'})=E'|_{S'}\cup \mathrm{ker}(\sigma_{X'}:E'|_{X'-S'}\to \sO_{X'-S'}).$$
Then we apply the cosection localized Gysin map 
\beq\label{g8}
0^!_{E'|_{X'},\sigma_{X'}}:A_*(E'|_{X'}(\sigma_{X'}))\to A_*(S')\eeq
to the cycle $[\![C_{X'/Y'}]\!]$ to obtain
\beq\label{g7} 
[\![X']\!]\virt\lloc=0^!_{E|_{X'},\sigma_{X'}}[\![C_{X'/Y'}]\!]\in A_*(S').
\eeq
Since $\dim C_{X'/Y'}=\dim Y=\dim S+\mathrm{rank}(M)-1$, the dimension of $[\![X']\!]\virt\lloc$
is \beq\label{g21}\dim S+\mathrm{rank}(M)-1-\mathrm{rank}(E)=3g-3+n-1+\sum_i\chi(L_i).\eeq

This class $[\![X']\!]\virt\lloc$ depends only on the perfect obstruction theory $\phi_{X'}$ and the cosection $\sigma_{X'}$.
In particular, the virtual cycle $[\![X']\!]\virt\lloc\in A_*(S')$ is independent of a choice of the resolution \eqref{d34}, and hence the choices of $Y$, $E$, etc in \S\ref{S5.3}. 

The construction of \eqref{g8} in \cite[\S2]{KLc} under Assumption \ref{aa} goes as follows: 
Let $\rho:\tY'\to Y'$ be the blowup of $Y'$ along $\fZ'$ (equivalently along the ideal $\sigma'(E')$) so that the pullback of $\sigma'$ is a surjection 
$\tE'=\rho^*E'\to \sO_{\tY'}(-\tfZ')$ where $\tfZ'$ denotes the exceptional divisor of $\rho$. 
Restricting these to $\tX'=X'\times_{Y'}\tY'$, we have a short exact sequence
$$0\lra F'\lra \tE'|_{\tX'}\lra \sO_{\tX'}(-\tS')\lra 0$$
of locally free sheaves where $\tS'=S'\times_{Y'}\tY'$. 
For $\xi'\in A_*(E'|_{X'}(\sigma_{X'}))$, we pick $\zeta'\in A_*(F')$ and $\eta'\in A_*(E'|_{S'})$  
such that 
\beq\label{g9} \xi'=\rho_*\zeta'+{\imath'}_*\eta',\and \xi'|_{X'-S'}=\zeta'|_{\tX'-\tS'}\eeq
where ${\imath'}_*$ is the pushforward induced by the inclusion ${S'}\subset X'$. 
Then \eqref{g8} is defined by
\beq\label{g10} 
0^!_{E'|_{X'},\sigma_{X'}}(\xi')= -{\rho_S}_*\left( \tS'\cdot 0^!_{F'}(\zeta')\right)+0^!_{E'|_{S'}}(\eta')\in A_*(S')
\eeq
where $\rho_S:\tS'\to S'$ is the restriction of $\rho$, $0^!_{F'}$ and $0^!_{E'|_{S'}}$ denote the ordinary Gysin maps and $\tS'\cdot $ denotes the intersection with the Cartier divisor $\tS'$ (cf. \cite{Fulton}).
By \cite[\S2]{KLc}, \eqref{g10} is independent of all the choices. Moreover, instead of $\rho$, 
we may use any $\sigma_{X'}$-regularizing morphism (Definition \ref{a18}) and 
\eqref{g8} is independent of this choice as well.

\subsection{Integral transformations by virtual cycles}\label{S2.3}

In this subsection, we define integral transformations by the virtual cycles constructed in Theorem \ref{g6}.
We will see in the subsequent section that these transformations form cohomological field theories. 

Under Assumption \ref{aa}, by Theorem \ref{g6}, we have the virtual cycle $[\![X']\!]\virt\lloc\in A_*(S')$
where $S'=S\times_XX'$. 
From \eqref{g11}, we have morphisms
\beq\label{g15} Z'\mapleft{q} S'\mapright{p} S\eeq
where $p$ is obtained from $g$ by base change and $q$ is the restriction of $\bq'$ to $S'$. 
Since $g$ is the blowup at the origin, $p$ is proper. As $S=S\urig_{g,\ugamma}$ is proper, so is $S'$.

As the blowup $Z'$ is a line bundle over $\PP Z=\PP w_\ugamma^{-1}(0)$, 
the cohomology $H^*(Z')$ of $Z'$ is isomorphic to 
$H^*(\PP w_\ugamma^{-1}(0))\supset H^*_{\mathrm{prim}}(\PP w_\ugamma^{-1}(0)).$
By the Thom-Sebastiani isomorphism \eqref{d19}, we have
\begin{align}\label{g12}
\cH_\ugamma&=\bigotimes_{j=1}^n\cH_{\gamma_j}\cong H^{\sum_jN_{\gamma_j}-2}_{\mathrm{prim}}(\PP w_\ugamma^{-1}(0))^{G^n}\\
&\subset H^{\sum_jN_{\gamma_j}-2}(\PP w_\ugamma^{-1}(0))=H^{\sum_jN_{\gamma_j}-2}(Z').\nonumber
\end{align}
%\beq\label{g12} \cH_\ugamma=\bigotimes_{j=1}^n\cH_{\gamma_j}\cong H^{\sum_jN_{\gamma_j}-2}_{\mathrm{prim}}(\PP w_\ugamma^{-1}(0))^{G^n}\eeq
%\[\subset \ H^{\sum_jN_{\gamma_j}-2}(\PP w_\ugamma^{-1}(0))=H^{\sum_jN_{\gamma_j}-2}(Z').
%\]
Since $N_{\gamma_j}$ are often an odd number, a class in $\cH_\ugamma$ is not algebraic in general. 
Hence for a Fourier-Mukai type integral transformation with our state space $\cH_\ugamma$, we cannot use the Chow groups. 

For an irreducible variety $V$, we can associate the Borel-Moore homology class of $V$ after choosing a suitable triangulation. See \cite{Iver, Bred} for Borel-Moore homology. We thus have the cycle class map (cf. \cite[Chapter 19]{Fulton})
\beq\label{g16} h_{S'}:A_*(S')\to H_*(S')\eeq 
and the homological virtual cycle
\beq\label{g17} [X']\virt\lloc=h_{S'}[\![X']\!]\virt\lloc\in H_*(S').\eeq

By \eqref{g15} and \eqref{g17}, we define our integral transformations as  
\beq\label{g18}\Phi_{[X']\virt\lloc}: H^*(Z')\to H^*(S),\quad \Phi_{[X']\virt\lloc}(v)=p_*( [X']\virt\lloc\cap q^*(v)).\eeq

We have a forgetful morphism 
$$\mathrm{st}:S=S_{g,\ugamma}\urig\lra \rmgn, \quad (C,p_j,L_i,\varphi,\psi)\mapsto (|C|,|p_j|)$$
whose pushforward is denoted by $\mathrm{st}_*:H_*(S)\to H_*(\rmgn)\cong H^*(\rmgn)$.
Composing \eqref{g18} and \eqref{g12} with 
\beq\label{g20} \frac{(-1)^D}{\deg \, \mathrm{st}} \mathrm{st}_*:H_*(S)\lra H_*(\overline{M}_{g,n})\eeq 
where $D=-\sum_i\chi(L_i)$, we obtain the composite
\beq\label{ga}
 \Omega'_{g,n,\ugamma}:\cH_\ugamma \lra H^*(Z')\lra H_*(S)\lra H_*(\rmgn)\cong H^*(\rmgn).
\eeq
Summing up for $\ugamma\in G^n$, we obtain

\begin{defi}\label{g19} For $g\ge 0$ and $n\ge 0$ with $2g-2+n>0$, we have homomorphisms
\beq\label{g26} \Omega'_{g,n}:\cH^{\otimes n}=\bigoplus_\ugamma\cH_\ugamma \lra H_*(\rmgn)\cong H^*(\rmgn). 
\eeq
\end{defi}
By \eqref{g21}, the image of $\Omega'_{g,n}|_{\cH_\ugamma}$ lies in degree
\beq\label{g22}
2\Bigl( 3g-3+n-1+\sum_i\chi(L_i)\Bigr)-\Bigl( \sum_j N_j-2\Bigr)=\eeq
$$\qquad\qquad=6g-6+2n+2\sum_i\chi(L_i)-\sum_j N_{\gamma_j}$$
which matches the computation in \cite[Theorem 4.1.1]{FJR}.

In \S\ref{S4}, we will see that
the homomorphisms in Definition \ref{g19} form a cohomological field theoy by comparing them 
with the cohomological field theory constructed in \cite{KLq}.

\section{Quantum singularity theories via intersection homology}\label{S3}

In this section, we recall the construction of cohomological field theories by intersection homology in \cite{KLq}.

We refer to \cite[\S2]{KLq} for useful facts about Borel-Moore homology and intersection homology.
For instance, we will use the natural map
$$\epsilon_Y:\ih_i(Y)\to H_i(Y)$$
that sends the middle perversity intersection homology cycles to itself in the Borel-Moore homology. 
Also, we will use proper pushforwards and placid (flat) pullbacks of Borel-Moore homology groups. 

We first recall the cosection localized Gysin maps for intersection homology groups. 
Let $E$ be a vector bundle of rank $r$ over a \DM stack $Y$. Let $s$ be a section and $X=s^{-1}(0)$.
The canonical orientation on the fibers by the complex structure gives us the Thom class $\tau_{Y/E}\in H^{2r}(E,E-0_E)=H^{2r}_Y(E)$ where $0_E$ denotes the zero section of $E$. 
The section $s$ induces a map $(Y,X)\to (E,0_E)$ and $e(E,s)=s^*\tau_{Y/E}\in H^{2r}(Y,Y-X)=H^{2r}_X(Y)$. The (ordinary) Gysin map is now defined as
$$s^!:H_i(Y)\lra H_{i-2r}(X),\quad \xi\mapsto \xi\cap e(E,s).$$

When $E$ is equipped with a cosection $\sigma:E\to \sO_Y$, $s^!$ further localizes to $S=X\cap \fZ$ 
where $\fZ$ is the locus where $\sigma$ is not surjective. Let $\imath:S\to X$ be the inclusion map.
The following cosection localized Gysin map is the main machinery in this section. 

\begin{theo}\label{a16} \cite[Theorem 3.2]{KLq}
Under Assumption \ref{aa}, we have a homomorphism 
\beq\label{a17} s^!_\sigma:\ih_i (Y)\lra H_{i-2r}(S)\eeq
whose composition with $\imath_*:H_{i-2r}(S)\to H_{i-2r}(X)$ equals $s^!\circ \epsilon_Y$.
\end{theo}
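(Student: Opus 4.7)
The plan is to construct $s^!_\sigma$ by reducing to the smooth model $Y'$ from Assumption \ref{aa}, on which the cosection-localized Gysin map is already at hand through the cycle-level construction \eqref{g10}. The three steps are: lift the intersection-homology class on $Y$ to a Borel-Moore class on $Y'$, apply the cosection-localized Gysin map on the smooth $Y'$, and push forward along the proper map $p:S'\to S$.

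First I would construct a lift $\ell:\ih_i(Y)\to H_i(Y')$ satisfying $f_*\circ\ell=\epsilon_Y$. The point is that $Z=w_\ugamma^{-1}(0)$ has only an isolated quasi-homogeneous singularity at the origin and $g:Z'\to Z$ resolves it to the line bundle $\sO_{\PP Z}(-1)$; by \eqref{d11} together with the cohomology of $Z'$, the intersection homology $\ih_*(Z)$ is a canonical summand of $H_*(Z')$, compatibly with $\epsilon_Z$ factoring through $g_*$. Since $\bq:Y\to Z$ is smooth and the square \eqref{g11} exhibits $f:Y'\to Y$ as its flat base change by $g$, placid pullback transfers the $Z$-level splitting to $Y$ and yields the desired lift.

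Second, the smoothness of $Y'$ means the pullback data $(E',s',\sigma')$ sits in the usual ambient-smooth situation with $\sigma'\circ s'=0$, so the recipe of \eqref{g10} passes via the cycle class map to a cosection-localized Borel-Moore Gysin map $s'^!_{\sigma'}:H_*(Y')\to H_{*-2r}(S')$, compatible with the ordinary Gysin $s'^!$ after $\imath'_*$. Since $p:S'\to S$ is proper (being the base change of the proper map $g$), I would define
$$ s^!_\sigma(\xi)\defeq p_*\bl s'^!_{\sigma'}(\ell(\xi))\br \in H_{i-2r}(S). $$
The compatibility $\imath_*\circ s^!_\sigma=s^!\circ\epsilon_Y$ then follows by combining the smooth-case identity $\imath'_*\circ s'^!_{\sigma'}=s'^!$, the equality $f_*\ell=\epsilon_Y$, the Cartesian identity $\imath_*p_*=f_*\imath'_*$, and the Verdier-type base-change $s^!\circ f_*=f_*\circ s'^!$ on Borel-Moore homology.

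The main obstacle I anticipate is independence of the lift $\ell$. Any two lifts differ by an element $\delta'\in\ker(f_*:H_i(Y')\to H_i(Y))$, which by the decomposition theorem for the proper birational map $f$ is supported on the exceptional locus $f^{-1}(\bq^{-1}(0))\subset Y'$. One must show $p_*s'^!_{\sigma'}(\delta')=0$. I would exploit that this exceptional locus sits over the single point $0\in Z$, so its image under $p$ lands in $S\cap\bq^{-1}(0)$, and combine this with a fiberwise analysis of \eqref{g10} on classes supported on the exceptional locus, using that the restriction of $\sigma'$ there is induced from $\sigma|_{\bq^{-1}(0)}$. Independence from the choice of $\sigma'$-regularizing blowup $\rho$ and of the cycle representatives $(\zeta',\eta')$ in \eqref{g9} is already settled in \cite[\S2]{KLc}.
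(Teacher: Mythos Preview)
Your approach is genuinely different from the construction outlined in the paper (which is the one from \cite{KLq}). There, one takes a $\sigma$-\emph{regularizing} morphism $\rho:\tY\to Y$---the blowup of $Y$ along the ideal $\sigma(E)$---which is an isomorphism away from the degeneracy locus $\fZ=\zero(\sigma)$ and makes the pulled-back cosection surject onto a line bundle $\sO_{\tY}(-\tfZ)$; the map $s^!_\sigma$ is then given directly by the explicit formula \eqref{a19}, and the decomposition theorem is invoked only to produce the auxiliary classes $\zeta,\eta$. You instead pass to the resolution $f:Y'\to Y$ coming from the blowup $Z'\to Z$ of the \emph{singular locus}, and then on the smooth $Y'$ you still have to run the $\sigma'$-regularizing construction \eqref{g10} to define $s'^!_{\sigma'}$. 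So your route is a two-step detour (resolve $Y$, then regularize $\sigma'$) rather than the paper's single step (regularize $\sigma$), and it does not bypass the core machinery. What it would buy, if it worked, is that the Borel--Moore input on $Y'$ is ordinary homology rather than intersection homology; what the paper's approach buys is that the formula is written once and for all on $Y$ itself, with no auxiliary smooth model.

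The genuine gap is the independence of the lift $\ell$, and your sketch does not close it. You correctly observe that any $\delta'\in\ker(f_*:H_i(Y')\to H_i(Y))$ is the pushforward of some $\delta'_0\in H_i(E')$ from the exceptional locus $E'=f^{-1}(\bq^{-1}(0))$. But to conclude $p_*s'^!_{\sigma'}(\delta')=0$ you would need, after using pushforward-compatibility of the localized Gysin along $E'\to\bq^{-1}(0)$, that $(f|_{E'})_*\delta'_0=0$ in $H_i(\bq^{-1}(0))$. What you actually know is only that its further image in $H_i(Y)$ vanishes, and the closed-embedding pushforward $H_i(\bq^{-1}(0))\to H_i(Y)$ has no reason to be injective. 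The phrase ``fiberwise analysis of \eqref{g10}'' does not address this; nothing about $\sigma|_{\bq^{-1}(0)}$ forces the relevant class to die. In fact, the paper later proves (Proposition~\ref{h9}) exactly the compatibility $p_*s'^!_{\sigma'}(\xi')=s^!_\sigma(\xi)$ whenever $f_*\epsilon_{Y'}(\xi')=\epsilon_Y(\xi)$, which would render your definition well-posed---but that proof already \emph{uses} the direct formula \eqref{a19} on $Y$, so it cannot serve as an independent foundation for your construction.
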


Here is an outline of the construction: For $s^!_\sigma$, we have to resolve the degeneracy of $\sigma$.
\begin{defi}\label{a18}
Under Assumption \ref{aa}, a proper morphism $\rho:\tY\to Y$ is called $\sigma$-\emph{regularizing} if it is an isomorphism over $Y-\fZ$ and the pullback $\tE=\rho^*E\to \sO_{\tY}$ of $\sigma:E\to \sO_Y$ factors through a surjective homomorphism $\tilde{\sigma}:\tE\to  \sO_{\tY}(-\tfZ)$ for an effective Cartier divisor $\tfZ$ lying over $\fZ$. 
\end{defi}
For instance, the blowup of $Y$ along the ideal $I=\sigma(E)\subset \sO_Y$ is $\sigma$-regularizing.

Let $\rho:\tY\to Y$ be a $\sigma$-regularizing morphism and $F$ be the kernel of the surjection $\tsig:\tE\to \sO_{\tY}(-\tfZ)$. 
By the decomposition theorem \cite{BBD}, for any $\xi\in H_i(Y)$, we can always find $\zeta\in H_i(\tY)$ and $\eta\in H_i(\fZ)$ such that 
$$\epsilon_Y(\xi)=\rho_*(\zeta)+\jmath_*(\eta), \quad \epsilon_Y(\xi)|_{Y-\fZ}=\zeta|_{\tY-\tfZ}$$
where $\jmath:\fZ\to Y$ denotes the inclusion (cf. \cite[Lemma 2.2]{KLq}). Then $s^!_\sigma$ is defined by 
\beq\label{a19} s^!_\sigma(\xi)=
-{\rho_{S}}_*\left(\zeta\cap e(F,\tilde{s})\cap e(\sO_{\widetilde{X}}(\tS), t_{\tS})\right) + \eta\cap e(E|_\fZ,s|_\fZ)\eeq
where $t_{\tS}$ is the section of $\sO_{\widetilde{X}}(\tS)$ whose zero locus is $\tS$. 
It was proved in \cite[\S3]{KLq} that $s^!_\sigma(\xi)$ is independent of the choices of $\zeta$, $\eta$ 
and $\rho$.

\bigskip

In the FJRW theory, with $S=S\urig_{g,\ugamma}$, $X=X\urig_{g,\ugamma}$, $B_\ugamma=\prod_{j=1}^n\CC^{N_{\gamma_j}}$ and $\underline{w}={w}_\ugamma$, Assumption \ref{aa} is satisfied and hence we have the cosection localized Gysin map 
\beq\label{a24} s_\sigma^!:\ih_i(Y)\lra H_{i-2\mathrm{rank}(E)}(S)\eeq
by Theorem \ref{a16}. 
Moreover since $\bq_Y$ is smooth, we have the pullback homomorphism
\beq\label{a25} \bq_Y^*:\ih_i(Z)\lra \ih_{i+\dim_\RR M-\dim_\RR B}(Y).\eeq
Composing \eqref{a24} and \eqref{a25} with the Thom-Sebastiani isomorphism 
$$\cH_\ugamma=\bigotimes_j\cH_{\gamma_j}\cong \ih_{\sum_j N_{\gamma_j}}({w}_\ugamma ^{-1}(0))^{G^n},$$ 
we obtain 
\beq\label{a23} \cH_\ugamma\cong \ih_{\sum_j N_{\gamma_j}}({w}_\ugamma ^{-1}(0))^{G^n}\mapright{\bq_Y^*} \ih_*(Y)\mapright{s^!_\sigma} H_{2\mathrm{vd}(X)-\sum N_{\gamma_j}}(S_{g,\ugamma}\urig)\eeq
where $\mathrm{vd}(X)=3g-3+n+\sum_i\chi(L_i)$ is the virtual dimension of $X$.

Composing \eqref{a23} with \eqref{g20} and summing over $\ugamma$,
we obtain
%\beq\label{a21} \Psi_\ugamma:\bigotimes_{1\le j\le n}\cH_{\gamma_j}\lra H_*(S_{g,\ugamma}\urig)\lra H_*(\overline{M}_{g,n})\cong H^*(\overline{M}_{g,n}).\eeq
%Since $\cH=\bigoplus_{\gamma\in G}\cH_\gamma$, by taking the sum of $\Psi_\ugamma$ over $\ugamma\in G^n$, we obtain 
\beq\label{a22}
\Omega_{g,n}:\cH^{\otimes n}\lra H^*(\rmgn).\eeq

\begin{theo}\label{a20} \cite[Theorem 4.5]{KLq} The homomorphisms $\{\Omega_{g,n}\}_{2g-2+n>0}$ in \eqref{a22} define a cohomological field theory with a unit for the state space $\cH$.
Moreover this cohomological field theory coincides with that in \cite[Theorem 4.2.2]{FJR} when we restrict $\cH$ to the narrow sector  $\bigoplus_{\ugamma: N_{\gamma_j}=0,\, \forall j}\bigotimes_j\cH_{\gamma_j}$. 
\end{theo}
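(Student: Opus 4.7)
The plan is to verify the cohomological field theory axioms one by one for $\{\Omega_{g,n}\}$: $S_n$-equivariance, the correct grading, the unit axiom, and the two gluing/splitting axioms at separating and non-separating nodes. The grading is already handled by the dimension computation \eqref{g22}, which matches \cite[Theorem 4.1.1]{FJR}. The $S_n$-equivariance is essentially built into the construction: the universal line bundles $\cL_i$, the resolution \eqref{d34}, the cosection $\sigma_M$, and the Thom-Sebastiani isomorphism underlying \eqref{g12} are all symmetric in the marked points. The unit axiom should follow by identifying $1\in\cH_{J_w}$ with the natural generator and comparing with the pullback along the forgetful morphism $\rmgn\to\overline{M}_{g,n-1}$, noting that at a marked point of trivial isotropy the line bundles $L_i$ acquire an extra section and the cosection $\sigma$ and target $Z$ are unchanged.

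The bulk of the work lies in the two gluing axioms, for which I would exploit the product structure of the boundary of $\rmgn$. At a separating node splitting $(g,n)$ into $(g_1,n_1+1)$ and $(g_2,n_2+1)$ with matched twists $\gamma,\gamma^{-1}\in G$, the restriction of $S_{g,\ugamma}\urig$ to the corresponding boundary divisor decomposes as a fiber product of the two smaller spin moduli stacks; the resolution \eqref{d34}, the obstruction bundle $E$, and the section $s$ split accordingly, and the cosection $\sigma$ becomes a Thom-Sebastiani sum. Correspondingly, the target hypersurface $Z_\ugamma$ is the zero locus of $w_{\ugamma_1}\boxplus w_{\ugamma_2}$. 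The key technical input I would need is a Thom-Sebastiani compatibility of the cosection localized intersection-homology Gysin map \eqref{a17}, namely that for the split cosection,
\[ s^!_{\sigma_1\boxplus\sigma_2}\bigl(\bq_Y^*(\alpha_1\boxtimes\alpha_2)\bigr) = s^!_{\sigma_1}\bq_{Y_1}^*\alpha_1 \;\times\; s^!_{\sigma_2}\bq_{Y_2}^*\alpha_2 \]
under the isomorphism \eqref{d19} identifying $\cH_{\gamma_1}\otimes\cH_{\gamma_2}$ with intersection homology of the Thom-Sebastiani zero locus. For the non-separating gluing, a similar but more delicate argument via degeneration to a nodal curve is needed, where the pair of matched twists $(\gamma,\gamma^{-1})$ gets summed over $\gamma\in G$.

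For the narrow sector identification, observe that if $N_{\gamma_j}=0$ for every $j$, then $B_\ugamma$, $Z_\ugamma$, and the blowup $Z'_\ugamma$ are all points, $\cH_\ugamma=\CC$, and the smooth pullback \eqref{a25} is the identity. Therefore \eqref{a23} reduces to the image under the cycle class map of $0^!_{E,\sigma_X}[\![C_{X/Y}]\!]\in A_*(S)$, which is exactly the cosection localized virtual class of \cite[Theorem 5.1]{KLc} used in \cite{CLL}. Since \cite{CLL} proved that this class delivers the FJRW invariants of \cite{FJR} in the narrow sector, the desired agreement follows.

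The hard part will be the Thom-Sebastiani compatibility of \eqref{a17}. Proving it from the formula \eqref{a19} requires choosing a $\sigma$-regularizing morphism (Definition \ref{a18}) compatible with the product structure; the natural candidate is the blowup along the product ideal $\sigma_1(E_1)+\sigma_2(E_2)$, but the exceptional divisor couples the two factors, so extracting a clean product formula from \eqref{a19} will demand a careful analysis of the Chern classes of the kernel $F$ of $\tilde\sigma$ on the blown up stack and a projection formula argument, possibly after further refining the $\sigma$-regularizing morphism. The non-separating case additionally needs invariance of $s^!_\sigma$ under specialization, which one would establish via deformation to the normal cone combined with the deformation invariance asserted in Theorem \ref{g6}, now upgraded to the intersection-homology setting.
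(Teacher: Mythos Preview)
The paper does not prove Theorem \ref{a20}; it is stated with the attribution \cite[Theorem 4.5]{KLq} and no argument is given in this paper. The result is being \emph{recalled} from the companion paper \cite{KLq} in order to set up the comparison carried out in \S\ref{S4}. So there is no ``paper's own proof'' to compare your proposal against.

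That said, your outline is a reasonable sketch of the strategy one expects (and which is in fact carried out in \cite{KLq}): verify the CohFT axioms directly, with the substantive content being a Thom--Sebastiani-type multiplicativity of the cosection-localized Gysin map $s^!_\sigma$ under gluing, together with a specialization/deformation invariance statement for $s^!_\sigma$ in the intersection-homology setting. Your reduction of the narrow-sector comparison to the cosection-localized virtual class of \cite{KLc} and thence to \cite{CLL} is also on target. However, your proposal remains a sketch rather than a proof: you correctly identify the Thom--Sebastiani compatibility of \eqref{a17} and the deformation invariance as the hard inputs, but only say that they must be proved, not how. (In particular, your suggested $\sigma$-regularizing blowup along $\sigma_1(E_1)+\sigma_2(E_2)$ does not obviously decouple the two factors, as you yourself note.) These are genuine theorems established in \cite{KLq}; in the present paper they are simply quoted.
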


Here a cohomological field theory is a term that codifies nice properties expected from curve counting invariants as follows.
\begin{defi} Let $\cH$ be a vector space equipped with a basis $\{e_1,\cdots,e_m\}$ and a perfect pairing $\langle e_k,e_l\rangle=c_{kl}$. 
A \emph{cohomological field theory} with a unit $\mathbf{1}$ for the state space $\cH$ consists of homomorphisms
\beq\label{d36} \Omega_{g,n}:\cH^{\otimes n}\lra H^*(\rmgn), \quad \text{for }2g-2+n>0\eeq
satisfying the following:
\begin{enumerate}
\item if we let the symmetric group $S_n$ act on $\rmgn$ by permuting the markings and on $\cH^{\otimes n}$ by permuting the factors, $\Omega_{g,n}$ is $S_n$-equivariant; 
\item if we let $u:\overline{M}_{g-1,n+2}\to \overline{M}_{g,n}$ denote the gluing of the last two markings, then we have
\beq\label{d37} u^*\Omega_{g,n}(v_1,\cdots,v_n)=\sum_{k,l} c^{kl}\Omega_{g-1,n+2}(v_1,\cdots,v_n, e_k,e_l)\eeq 
in $H^*(\overline{M}_{g-1,n+2})$ for all $v_i\in \cH$ where $(c^{kl})=(c_{kl})^{-1}$;
\item if we let $u:\overline{M}_{g_1,n_1+1}\times \overline{M}_{g_2,n_2+1}\to \overline{M}_{g,n}$ with $g=g_1+g_2$ and $n=n_1+n_2$ denote the gluing of the last markings, then
\small 
\beq\label{d38} u^*\Omega_{g,n}(v_1,\cdots,v_n)=\sum_{k,l} c^{kl}\Omega_{g_1,n_1+1}(v_1,\cdots,v_{n_1}, e_k)\otimes  \Omega_{g_2,n_2+1}(v_{n_1+1},\cdots,v_{n}, e_l)\eeq 
\normalsize
in $H^*(\overline{M}_{g_1,n_1+1})\otimes H^*(\overline{M}_{g_2,n_2+1})$ for all $v_i\in \cH$;
\item if we let $\theta:\overline{M}_{g,n+1}\to \rmgn$ denote the morphism forgetting the last marking, we have 
\beq\label{d39} \Omega_{g,n+1}(v_1,\cdots,v_n,\mathbf{1})=\theta^*\Omega_{g,n}(v_1,\cdots,v_n),\quad \forall v_i\in \cH ;\eeq
\item $\Omega_{0,3}(v_1,v_2,\mathbf{1})=\langle v_1,v_2\rangle$ for $v_i\in \cH$. 
\end{enumerate}\end{defi}
For a smooth projective variety $Q$, letting $\rmgn(Q,d)$ denote the moduli stack of 
stable maps to $Q$ of genus $g$ and degree $d$, it is well known that
$$H^*(Q)^{\otimes n}\lra H^*(\rmgn),\quad (v_j)\mapsto p_*\left([\rmgn(Q,d)]\virt\cap \prod_{j=1}^nev_j^*(v_j)\right) $$
form a cohomological field theory where $p:\rmgn(Q,d)\to \rmgn$ is the forgetful morphism and $ev_j:\rmgn(Q,d)\to Q$ is the evaluation map at the $j$-th marking. 

\bigskip

The homomorphisms \eqref{a22} are suitable for proving the axioms of cohomological field theories while those in Definition \ref{g19} are defined by Fourier-Mukai type integral operators with \emph{algebraic kernels} $[\![X']\!]\virt\lloc$. In the subsequent section, we will prove that actually they are the same.

\bigskip

We end this section with the following.

\begin{prop}\label{g23}
Let $E$ be an algebraic vector bundle of rank $r$ over a smooth \DM stack $Y$.
Let $s\in H^0(E)$ and $\sigma\in H^0(E^\vee)$ satisfy $\sigma\circ s=0$. 
Let $X=\zero(s)$, $\fZ=\zero(\sigma)$ and $S=X\cap \fZ$. Then
$$s^!_\sigma[Y]=[X]\virt\lloc:=h_Y[\![X]\!]\virt\lloc \in H_{2\dim Y-2r}(S),
$$
where $s^!_\sigma$ is from Theorem \ref{a16} and $[\![X]\!]\virt\lloc$ is from Theorem \ref{g6} (with primes removed). 
\end{prop}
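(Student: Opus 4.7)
The plan is to prove the identity $s^!_\sigma[Y]=h_S([\![X]\!]\virt\lloc)$ by computing both sides on a common smooth $\sigma$-regularizing model and showing that the defining formulas \eqref{a19} and \eqref{g10} are parallel term-by-term under the cycle class map, with matching input decompositions.

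\emph{Common regularization.} First I would choose a proper $\sigma$-regularizing morphism $\rho:\tY\to Y$ with $\tY$ smooth, obtained by blowing up $Y$ along the image ideal $\sigma(E)\subset\sO_Y$ and then applying resolution of singularities. On $\tY$ the pullback cosection factors as a surjection $\tsig:\tE\twoheadrightarrow\sO_\tY(-\tfZ)$ with locally free kernel $F\subset\tE$. The hypothesis $\sigma\circ s=0$ pulls back to $\tsig\circ\tilde{s}=0$, so the pullback section $\tilde{s}$ of $\tE$ lifts uniquely to a section $\tilde{s}_F:\tY\to F$. The zero locus $\tX=\zero(\tilde{s}_F)$ equals $X\times_Y\tY$, and the preimage $\tS=\rho^{-1}(S)=\tX\cap\tfZ$ is a Cartier divisor in $\tX$ with defining section $t_\tS$.

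\emph{Borel--Moore side.} Since $Y$ is smooth, $[Y]\in H_*(Y)=\ih_*(Y)$. The decomposition $\zeta=[\tY]$, $\eta=0$ satisfies $\rho_*\zeta+\jmath_*\eta=[Y]$ and the compatibility on $Y-\fZ$, so \eqref{a19} gives
$$s^!_\sigma[Y]=-{\rho_S}_*\bigl([\tY]\cap e(F,\tilde{s}_F)\cap e(\sO_\tX(\tS),t_\tS)\bigr).$$
Because $\tY$ is smooth and $\tilde{s}_F$ is a bundle section, $[\tY]\cap e(F,\tilde{s}_F)$ is the Borel--Moore virtual class $[\tX]\virt$, and the further cap with $e(\sO_\tX(\tS),t_\tS)$ is intersection with the Cartier divisor $\tS$.

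\emph{Virtual cycle side.} For \eqref{g10} I would take $\zeta'=[\![C_{\tX/\tY}]\!]\in A_*(F)$; this cycle lives in $F$ because $\tilde{s}_F:\tY\to F$ realizes $\tX$ as the zero scheme of a section of $F$, so the cone sits inside $F|_\tX$. Choose $\eta'\in A_*(E|_S)$ with $\rho_*\zeta'+\imath_*\eta'=[\![C_{X/Y}]\!]$; such $\eta'$ exists and is supported on $E|_S$ because $\rho$ is an isomorphism over $Y\setminus\fZ$, forcing the difference to lie over $S$. Since $\tY$ is smooth, $0^!_F[\![C_{\tX/\tY}]\!]=\tilde{s}_F^![\![\tY]\!]=[\![\tX]\!]\virt$, so
$$[\![X]\!]\virt\lloc=-{\rho_S}_*\bigl(\tS\cdot[\![\tX]\!]\virt\bigr)+0^!_{E|_S}(\eta').$$
Applying $h$ and using compatibility of the cycle class map with proper pushforward, refined Gysin maps, and Cartier divisor intersections, the first term matches $s^!_\sigma[Y]$ exactly after cycle class.

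\emph{Main obstacle.} The remaining and main technical step is to show $h\bigl(0^!_{E|_S}(\eta')\bigr)=0$ in $H_*(S)$. I would exploit the independence of both formulas from the auxiliary decompositions (established in \cite{KLc, KLq}), together with smoothness of $Y$: with $\eta=0$ chosen on the Borel--Moore side, the compatibility of the two decompositions under the cycle class map forces $\imath_*\eta'$ to be a Chow class whose Borel--Moore image in $E|_X$ is null (it is a top-dimensional cycle supported on $E|_S$ whose class in $H_*(E|_X)$ must vanish by the degree-one birational identity $\rho_*[\tY]=[Y]$ applied after the Euler-class cap). The zero-section Gysin map then yields $h(0^!_{E|_S}\eta')=0$, completing the matching. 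Thus $s^!_\sigma[Y]=h_S([\![X]\!]\virt\lloc)=[X]\virt\lloc$ in $H_{2\dim Y-2r}(S)$.
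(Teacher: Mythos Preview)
Your overall strategy matches the paper's: pass to a $\sigma$-regularizing $\rho:\tY\to Y$, use $\zeta=[\tY]$, $\eta=0$ on the Borel--Moore side, use $\zeta'=[\![C_{\tX/\tY}]\!]$ on the Chow side, and match the two formulas term by term under the cycle class map. The first terms do match just as you say.

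The gap is in your ``main obstacle''. Your argument that $h(0^!_{E|_S}(\eta'))=0$ does not work. From the deformation you only get $h(\imath_*\eta')=0$ in $H_*(E)$ (or at best $H_*(E|_X)$), and this does \emph{not} force $h(\eta')=0$ in $H_*(E|_S)$, because the pushforward $\imath_*$ along $E|_S\hookrightarrow E|_X$ has no reason to be injective. Since $0^!_{E|_S}$ is an isomorphism onto $A_*(S)$, your conclusion $h(0^!_{E|_S}\eta')=0$ simply does not follow from $h(\imath_*\eta')=0$. The parenthetical justification (``degree-one birational identity applied after the Euler-class cap'') does not address this.

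The paper avoids the issue entirely by showing you may take $\eta'=0$. The point is that $\rho$ induces a proper birational map of deformation-to-normal-cone families $M_{\tX/\tY}\to M_{X/Y}$ (both are irreducible closures of the graphs of $t^{-1}\tilde s$ and $t^{-1}s$, and $\rho$ identifies them over $t\ne 0$), so $\rho_*[\![M_{\tX/\tY}]\!]=[\![M_{X/Y}]\!]$ as cycles. Now apply the Gysin map for the Cartier divisor $\{t=0\}$ and use its compatibility with proper pushforward (\cite[Ch.~2]{Fulton}): this gives
\[
\rho_*[\![C_{\tX/\tY}]\!]=[\![C_{X/Y}]\!]\quad\text{in }A_*(C_{X/Y})\subset A_*(E|_X(\sigma)),
\]
not merely in $A_*(E)$. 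Hence the decomposition \eqref{g9} holds with $\eta'=0$, and \eqref{g10} reduces to the single term $-{\rho_S}_*(\tS\cdot 0^!_F[\![C_{\tX/\tY}]\!])$, which matches $s^!_\sigma[Y]$ under the cycle class map exactly as you computed. So your proof becomes correct once you replace the flawed vanishing argument for $\eta'$ by this direct Chow-level identity.
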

\begin{proof}
Suppose $\sigma=0$. Let $\Gamma\subset E\times \CC^*$ be the graph 
of the section $(y,t)\mapsto t^{-1}s(y)$. Let $\overline{\Gamma}$ be the closure of $\Gamma$ in $E\times \CC$.  
Then for $t\ne 0$, the fiber $\overline{\Gamma}_t$ over $t$ is isomorphic to $Y$ and the fiber $\overline{\Gamma}_0$ over $t=0$ is the normal cone $C_{X/Y}$ (cf. \cite{Fulton}).
As $Y=\overline{\Gamma}_1$ is homologous to $\overline{\Gamma}_0=C_{X/Y}$, 
$s^![Y]=s^![C_{X/Y}]$. The proposition now follows from the fact that $s^![C_{X/Y}]=h_X\circ 0^!_E[\![C_{X/Y}]\!]$ by \cite[Chapter 19]{Fulton} where $0^!_{E}$ denotes the algebraic Gysin map of Chow groups.

When  $\sigma$ is not necessarily zero, we let $\rho:\tY\to Y$ be the blowup of $Y$ along $\fZ$ so that we have an exact sequence
$$0\lra F\lra \tE\mapright{\tilde{\sigma}} \sO_{\tY}(-\tfZ)\lra 0$$
where $\tfZ$ is the exceptional divisor, $\tE$, $\tilde{s}$ and $\tilde{\sigma}$ denote the pullbacks of $E$, $s$ and $\sigma$ to $\tY$ respectively. Then %there exists $\eta\in A_*(\fZ)$ such that
$$[\![Y]\!]=\rho_*[\![Y']\!].$$ % +\jmath_*\eta\in A_*(Y)$$ where $\jmath:\fZ\to Y$ is the inclusion. 
From the commutative diagram
\[\xymatrix{
\tY\times \CC^*\ar[d] \ar[r]^{t^{-1}\tilde{s}} & \tE\times\CC^*\ar[d]\\
Y\times\CC^*\ar[r]^{t^{-1}s} & E\times\CC^*,
}\]
we find that $[\![C_{X/Y}]\!]=\rho_*[\![C_{\tX/\tY}]\!]$ %+\imath_*\eta'$ for some $\eta'\in A_*(S)$. 
as $C_{X/Y}$ (resp. $C_{\tX/\tY}$) is rationally equivalent to $Y$ in $E$ (resp. $\tY$ in $\tE$). %$\eta'$ is homologous to $\eta$ in $\fZ$. 
By \eqref{g10}, 
$$[\![X]\!]\virt\lloc=0^!_{E|_X,\sigma}[\![C_{X/Y}]\!]=-{\rho_S}_*(\tS\cdot 0^!_F[\![C_{\tX/\tY}]\!]).
$$ %+0^!_{E|_S}\eta'$$
By \cite[Chapter 19]{Fulton}, \eqref{a19} and the case for trivial cosection, 
if we apply the cycle class map $h_S$, we obtain
$$h_S[\![X]\!]\virt\lloc=-{\rho_S}_*(t_{\tS}^! \tilde{s}^![\tY]) %+(s|_\fZ)^!\eta
=s^!_\sigma[Y]$$
as desired. 
\end{proof}

A direct consequence of Proposition \ref{g23} is the following. 
\begin{coro}\label{g24}
Under Assumption \ref{aa}, letting $Z'\to Z=Z_\ugamma$ be the blowup at the origin and using the notation of \S\ref{S2}, we have
\beq\label{g25} [X']\virt\lloc={s'}^!_{\sigma'}[Y'], \and \Phi_{[X']\virt\lloc}(\alpha)=p_*({s'}^!_{\sigma'}[Y']\cap q^*(\alpha))\eeq 
for $\alpha\in H^*(Z')$. 
\end{coro}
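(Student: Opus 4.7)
The plan is to deduce this corollary directly from Proposition \ref{g23}, with the blowup setup providing exactly the smooth ambient stack required by that proposition. The first step is to verify that the hypotheses of Proposition \ref{g23} are met by the primed data $(Y', E', s', \sigma')$. Under Assumption \ref{aa}, the blowup $g: Z' \to Z$ at the origin is smooth (this is the whole point of blowing up the isolated hypersurface singularity, as discussed in \S\ref{S5.1}). Since $\bq: Y \to Z$ is smooth and smoothness is preserved under base change, $\bq': Y' = Y \times_Z Z' \to Z'$ is smooth; composing with the smooth structure morphism of $Z'$ shows that $Y'$ is smooth. The pulled-back data $E' = f^* E$, $s' = f^* s$, $\sigma' = f^* \sigma$ inherit the compatibility $\sigma' \circ s' = f^*(\sigma \circ s) = 0$, and by construction $X' = \zero(s')$, $\fZ' = \zero(\sigma')$, $S' = X' \cap \fZ'$.

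Second, apply Proposition \ref{g23} to $(Y', E', s', \sigma')$ to conclude
\[
[X']\virt\lloc = {s'}^!_{\sigma'}[Y'] \in H_{2\dim Y' - 2r}(S'),
\]
which is the first asserted identity.

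Third, substitute this identity into the definition of the integral transformation given in \eqref{g18}: for any $\alpha \in H^*(Z')$,
\[
\Phi_{[X']\virt\lloc}(\alpha) = p_*\bigl([X']\virt\lloc \cap q^*(\alpha)\bigr) = p_*\bigl({s'}^!_{\sigma'}[Y'] \cap q^*(\alpha)\bigr),
\]
which is the second asserted identity.

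There is no serious obstacle: the corollary is essentially a tautological reinterpretation of Proposition \ref{g23} in the blown-up setting. The only point worth emphasizing is conceptual rather than technical, namely that the blowup $Z' \to Z$ in Assumption \ref{aa} is introduced precisely so that the ambient stack $Y'$ becomes smooth and Proposition \ref{g23} becomes applicable; on the original $(Y, E, s, \sigma)$ the analogous identity would fail to make sense because $Y$ is singular and the cosection $\sigma$ need not descend to a cosection of $Ob_X$, which is exactly the difficulty that motivated the construction in \S\ref{S2.1}.
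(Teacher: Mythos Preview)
Your proof is correct and matches the paper's treatment exactly: the paper simply declares the corollary to be ``a direct consequence of Proposition \ref{g23}'' without further argument, and your write-up spells out precisely this deduction --- that the blowup renders $Y'$ smooth so that Proposition \ref{g23} applies to the primed data, and then substitution into the definition \eqref{g18} of $\Phi_{[X']\virt\lloc}$ gives the second identity.
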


\section{Comparison}\label{S4}
The goal of this section is to prove the following. 
\begin{theo}\label{g30} %We usnder Assmption \ref{aa},
The homomorphism $\Omega_{g,n}$ in \eqref{a22} equals $\Omega'_{g,n}$  in \eqref{g26}.
Hence, the integral transformations $\Phi_{[X']\virt\lloc}$ with algebraic kernels 
$[X']\virt\lloc$ give rise to cohomological field theories $\{\Omega'_{g,n}\}$.
\end{theo}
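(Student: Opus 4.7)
The plan is to prove the cycle-level identity
$$p_*\bigl({s'}^!_{\sigma'}[Y']\cap q^*\tilde\alpha\bigr) \,=\, s^!_\sigma\bigl(\bq_Y^*\alpha\bigr) \in H_*(S)$$
for each $\alpha\in\cH_\ugamma$, where $\tilde\alpha\in H^{N-2}(Z')$ with $N=\sum_j N_{\gamma_j}=\dim_\CC B$ is the image of $\alpha$ under the injection $\cH_\ugamma\hookrightarrow\ih_N(Z)\cong H^{N-2}_{\mathrm{prim}}(\PP Z)\subset H^{N-2}(Z')$. Summing over $\ugamma$ and postcomposing with $\frac{(-1)^D}{\deg\,\mathrm{st}}\mathrm{st}_*$ then yields $\Omega'_{g,n}=\Omega_{g,n}$, and the cohomological-field-theory axioms for $\Omega'_{g,n}$ follow from Theorem \ref{a20}.

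First, a projection formula: by Corollary~\ref{g24} and $q=\bq'\circ\imath'$,
$${s'}^!_{\sigma'}[Y']\cap q^*\tilde\alpha \,=\, {s'}^!_{\sigma'}\bigl([Y']\cap {\bq'}^*\tilde\alpha\bigr),$$
which I verify by inspecting \eqref{g10}: each of its three constituents --- pullback along the $\sigma'$-regularization $\tY'\to Y'$, ordinary Gysin map for the subbundle $F'\subset\tE'$, and intersection with the Cartier divisor $\tS'$ --- is $H^*(Y')$-linear and commutes with capping by ${\bq'}^*\tilde\alpha$.

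The key step is to compare $[Y']\cap{\bq'}^*\tilde\alpha\in H_*(Y')$ with $\bq_Y^*\alpha\in\ih_*(Y)$. Since $Y'=Y\times_Z Z'$ is Cartesian with $\bq_Y$ smooth, the identity $[Y']={\bq'}^*[Z']$ and proper base change yield
$$f_*\bigl([Y']\cap{\bq'}^*\tilde\alpha\bigr) \,=\, \bq_Y^*\bigl(g_*([Z']\cap\tilde\alpha)\bigr).$$
The decomposition theorem for the resolution $g:Z'\to Z$ of the isolated hypersurface singularity at $0\in Z$ splits $H_N(Z')\cong\ih_N(Z)\oplus V_0$ with $V_0$ supported at $0$; since $\tilde\alpha$ is primitive, its Poincar\'e dual $[Z']\cap\tilde\alpha$ lies in the $\ih_N(Z)$-summand, so $g_*([Z']\cap\tilde\alpha)=\epsilon_Z(\alpha)$ and hence
$$f_*\bigl([Y']\cap{\bq'}^*\tilde\alpha\bigr) \,=\, \epsilon_Y(\bq_Y^*\alpha).$$

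To finish, I would apply \eqref{a19} using a $\sigma$-regularizing morphism $\rho:\tY\to Y$ that factors as $\tY\to Y'\mapright{f}Y$ with $\tY\to Y'$ being $\sigma'$-regularizing --- take $\tY=\tY_0\times_Y Y'$ for any $\sigma$-regularization $\tY_0\to Y$, resolving further if needed. Choosing $\zeta\in H_*(\tY)$ as a lift of $[Y']\cap{\bq'}^*\tilde\alpha$ under the pushforward $\tY\to Y'$, and $\eta=0$ in the decomposition $\epsilon_Y(\bq_Y^*\alpha)=\rho_*\zeta+\jmath_*\eta$ furnished by the previous display, formula \eqref{a19} collapses to its first term, which by direct comparison with \eqref{g10} equals $p_*{s'}^!_{\sigma'}([Y']\cap{\bq'}^*\tilde\alpha)$. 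The \textbf{main obstacle} is justifying the primitive-vanishing claim that $[Z']\cap\tilde\alpha$ has zero component in $V_0$; this demands a precise compatibility between the Lefschetz-primitive decomposition of $H^{N-2}(\PP Z)$ and the decomposition theorem for $g$, which is classical for hypersurfaces with isolated singularities but must be tracked carefully, particularly in the boundary cases (e.g.\ $\dim_\CC Z\le 2$) where the paper passes to the normalization.
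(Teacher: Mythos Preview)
Your approach coincides with the paper's: the paper packages the key identity \eqref{ab} as Theorem \ref{h0}, whose proof uses a projection formula $s^!_\sigma(\xi\cap\alpha)=s^!_\sigma(\xi)\cap\alpha|_S$ (Proposition \ref{h1}) for your first step, the commutative diagram \eqref{g31} for your middle step, and a pushforward compatibility $p_*{s'}^!_{\sigma'}(\xi')=s^!_\sigma(\xi)$ whenever $f_*\epsilon_{Y'}(\xi')=\epsilon_Y(\xi)$ (Proposition \ref{h9}) for your final step. The ``main obstacle'' you flag is exactly the commutativity of \eqref{g31}, which the paper simply asserts from the cone formula \eqref{d11} without invoking the decomposition theorem; your argument here is more explicit than the paper's, not less.

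There is, however, a genuine technical gap in your final step. Your proposed composite $\tY=\tY_0\times_Y Y'\to Y$ is \emph{not} in general $\sigma$-regularizing in the sense of Definition \ref{a18}: that would require $Y'\to Y$ to be an isomorphism over $Y-\fZ$, i.e.\ $\bq^{-1}(0)\subset\fZ$, and this inclusion is not part of Assumption \ref{aa} and fails in the FJRW setup. ``Resolving further'' does not help, since any further resolution still factors through $Y'\to Y$. The paper's Proposition \ref{h9} sidesteps this by keeping the two regularizations separate: take a $\sigma$-regularizing $\rho:\tY\to Y$ and form the fiber product $\tY'=\tY\times_Y Y'$, which is $\sigma'$-regularizing over $Y'$; a decomposition $(\zeta',\eta')$ for $\xi'=[Y']\cap{\bq'}^*\alpha$ on the $\tY'$ side pushes forward under $\tilde f:\tY'\to\tY$ and $f_\fZ:\fZ'\to\fZ$ to a valid decomposition $(\tilde f_*\zeta',{f_\fZ}_*\eta')$ for $\xi=\bq^*v$ on the $\tY$ side, and the projection formula \eqref{h4} then matches the two expressions term by term. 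With this correction your argument becomes the paper's.
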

\begin{proof}
Recall that the maps \eqref{a22} (resp. \eqref{g26}) are obtained by composing $s^!_\sigma\bq^*$ (resp. $\Phi_{[X']\virt\lloc}$) with the stabilization \eqref{g20}. 
Therefore the theorem follows once we show that whenever 
\beq\label{g34} \epsilon_Z(v)=g_*([Z']\cap \alpha)\quad \text{for }\alpha\in H^*(Z'), v\in \ih_*(Z'),\eeq 
\beq\label{ab} \Phi_{[X']\virt\lloc}(\alpha)=p_*([X']\virt\lloc\cap q^*\alpha)=s^!_\sigma\bq^*(v), \eeq 
which is Theorem \ref{a0} below. Note that since $Z=w_\ugamma^{-1}(0)$ is a nondegenerate hypersurface singularity, if we let $m-1=\dim Z$, $\ih_m(Z)\cong H^{m-2}_{\mathrm{prim}}(\PP Z)\subset H^{m-2}(\PP Z)\cong H^{m-2}(Z')\cong H_m(Z')$ and we have a commutative diagram
\beq\label{g31}\xymatrix{
H_m(Z')\ar[d]_{g_*} & H^{m-2}(Z')\ar[l]_{[Z']\cap }^{\cong}& H^{m-2}(\PP Z)\ar[l]_\cong\\
H_m(Z) &\ih_m(Z)\ar[l]_{\epsilon_Z}\ar[r]^\cong &H^{m-2}_{\mathrm{prim}}(\PP Z).\ar@{^(->}[u]
}\eeq
%so that for $\Omega_{g,n}$ and $\Omega'_{g,n}$, 
By the isomorphism $\ih_m(Z)\cong H^{m-2}_{\mathrm{prim}}(\PP Z)$, $\alpha\in H^{m-2}_{\mathrm{prim}}(\PP Z)$ and $v\in \ih_m(Z)$ determine each other uniquely.
\end{proof}

For a proof of \eqref{ab}, we need a couple of propositions on the cosection localized Gysin map $s^!_\sigma$.

By \cite[5.2]{GM83}, for a \DM stack $Y$, we have a cap product
\beq\label{h2} \ih_i(Y)\times H^j(Y)\mapright{\cap} \ih_{i-j}(Y),\quad (\xi,\alpha)\mapsto \xi\cap \alpha\eeq
which fits into a commutative diagram
\beq\label{h3} \xymatrix{
\ih_i(Y)\times H^j(Y)\ar[r]^{\cap} \ar[d]_{\epsilon_Y\times 1} &\ih_{i-j}(Y)\ar[d]^{\epsilon_Y}\\
H_i(Y)\times H^j(Y)\ar[r]^{\cap} & \ih_{i-j}(Y)
}\eeq
where the bottom arrow is the usual cap product \cite[IX.3]{Iver}. The cap product satisfies the projection formula (cf. \cite[IX.3.7]{Iver})
\beq\label{h4} g_*(\xi\cap f^*\alpha)=f_*\xi\cap \alpha, \quad \xi\in H_i(X), \ \alpha\in H^j_W(Y)=H^j(Y,Y-W) 
\eeq
where $f:X\to Y$ is proper and $g:f^{-1}(W)\to W$ is the restriction of $f$ to $f^{-1}(W)=W\times_YX$ for closed $W\subset Y$. For closed $A$ and $B$ in $Y$ , 
\beq\label{h5} (\xi\cap \alpha)\cap \beta|_A=\xi\cap (\alpha\cup \beta)=(\xi\cap \beta)\cap \alpha|_B\eeq
for $\xi\in H_i(Y)$, $\alpha\in H^j_A(Y)$ and $\beta\in H^k_B(Y).$

\begin{prop}\label{h1} Let $X=\zero(s)$, $\fZ=\zero(\sigma)$ and $S=X\cap \fZ$.  
For $\alpha\in H^j(Y)$ and $\xi\in \ih_i(Y)$, we have
\[ s^!_\sigma(\xi\cap \alpha)=s^!_\sigma(\xi)\cap \alpha|_S \in H_{i-j-2r}(S).\]
\end{prop}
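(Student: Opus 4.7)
The plan is to go back to the defining formula \eqref{a19} for $s^!_\sigma(\xi)$ and verify the identity term-by-term, with the main tools being the associativity \eqref{h5} of cap products and the projection formula \eqref{h4}.

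First I would fix a $\sigma$-regularizing morphism $\rho:\tY\to Y$ and choose representatives $\zeta\in H_i(\tY)$, $\eta\in H_i(\fZ)$ of $\xi$ as in the construction, so that $\epsilon_Y(\xi)=\rho_*\zeta+\jmath_*\eta$ and $\epsilon_Y(\xi)|_{Y-\fZ}=\zeta|_{\tY-\tfZ}$. Using the compatibility diagram \eqref{h3} and the projection formula \eqref{h4} applied to the proper maps $\rho$ and $\jmath$, one checks that
\[ \zeta\cap\rho^*\alpha\in H_{i-j}(\tY)\and \eta\cap\jmath^*\alpha\in H_{i-j}(\fZ)\]
form an admissible choice of representatives for $\xi\cap\alpha\in\ih_{i-j}(Y)$; the restriction condition on $Y-\fZ$ is immediate because $\rho$ is an isomorphism there and cap products commute with restriction.

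Next I plug these representatives into \eqref{a19}. For the first summand I repeatedly apply the associativity \eqref{h5}, taking $\rho^*\alpha$ as the ``free'' class and the Euler classes $e(F,\tilde s)\in H^{2(r-1)}_{\tX}(\tY)$ and $e(\sO_{\tX}(\tS),t_{\tS})\in H^2_{\tS}(\tX)$ as the supported factors. This lets me move the $\rho^*\alpha$ past the Euler classes and rewrite
\[ (\zeta\cap\rho^*\alpha)\cap e(F,\tilde s)\cap e(\sO_{\tX}(\tS),t_{\tS})=\bl\zeta\cap e(F,\tilde s)\cap e(\sO_{\tX}(\tS),t_{\tS})\br\cap \rho^*\alpha|_{\tS}.\]
By functoriality of pullback on the fiber square with corners $\tS, S, \tY, Y$, we have $\rho^*\alpha|_{\tS}=\rho_S^*(\alpha|_S)$, so the projection formula \eqref{h4} for $\rho_S:\tS\to S$ collapses the pushforward into an outer cap with $\alpha|_S$. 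The second summand is handled by the same associativity: $(\eta\cap\jmath^*\alpha)\cap e(E|_\fZ,s|_\fZ)=(\eta\cap e(E|_\fZ,s|_\fZ))\cap \alpha|_S$, using $\jmath^*\alpha|_S=\alpha|_S$ from $S\hookrightarrow\fZ\hookrightarrow Y$.

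Adding the two summands recovers $s^!_\sigma(\xi)\cap\alpha|_S$, as desired. The computation requires no independence-of-choices argument because we verify the identity directly for one admissible choice of $(\rho,\zeta,\eta)$, and the left-hand side $s^!_\sigma(\xi\cap\alpha)$ is a priori well-defined by \cite[\S3]{KLq}. The only real obstacle is bookkeeping: keeping track of the supports of the relative classes when invoking \eqref{h5} and \eqref{h4}, and checking that the two ``routes'' by which $\alpha$ is restricted to $S$ (through $\tY\supset\tS\to S$ and through $\fZ\supset S$) agree with direct restriction $H^j(Y)\to H^j(S)$. Once these compatibilities are laid out, the proof is a short calculation.
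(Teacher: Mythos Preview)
Your proposal is correct and follows essentially the same argument as the paper: pick a $\sigma$-regularizing $\rho$ and representatives $(\zeta,\eta)$ for $\xi$, use \eqref{h3}--\eqref{h4} to see that $(\zeta\cap\rho^*\alpha,\eta\cap\jmath^*\alpha)$ are admissible representatives for $\xi\cap\alpha$, then apply the associativity \eqref{h5} and the projection formula term-by-term to pull $\alpha|_S$ outside. The paper's proof is slightly terser on the restriction-to-$Y-\fZ$ check and the compatibility $\rho^*\alpha|_{\tS}=\rho_S^*(\alpha|_S)$ that you make explicit, but the logic is identical.
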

\begin{proof}
Let $\rho:\tY\to Y$ be a $\sigma$-regularizing birational morphism such that the exceptional divisor $\tfZ$ lies over $\fZ=\zero(\sigma)$. The cosection $\sigma:E\to \sO_Y$ lifts to a surjective homomorphism $\tilde{\sigma}:\tE=\rho^*E\to \sO_{\tY}(-\tfZ)$ whose kernel is denoted by $F$.
For $s^!_\sigma(\xi)$, we pick 
$\zeta\in H_i(\tY)$ and $\eta\in H_i(\fZ)$ such that
$$\epsilon_Y(\xi)=\rho_*\zeta+\jmath_*\eta,\quad \epsilon_Y(\xi)|_{Y-\fZ}=\zeta|_{\tY-\tfZ}$$
where $\jmath:\fZ\to Y$ denotes the inclusion. 
By the definition of $s^!_\sigma$, we have 
\beq\label{h6} s^!_\sigma(\xi)=\eta\cap e(E|_\fZ,s|_\fZ)-{\rho_S}_*(\zeta\cap e(F,\tilde{s})\cap e(\sO_{\tX}(\tS),t_{\tS}) )\eeq
where $\rho_S:\tS\to S$ is the restriction of $\rho$ to $\tS=\tY\times_YS$ and $t_{\tS}$ is the section of 
$\sO_{\tX}(\tS)$ whose vanishing locus is the divisor $\tS$.

By \eqref{h3} and \eqref{h4}, we have
$$\epsilon_Y(\xi\cap \alpha)=\epsilon_Y(\xi)\cap \alpha=\rho_*(\zeta)\cap \alpha+\jmath_*(\eta)\cap \alpha$$
$$=\rho_*(\zeta \cap \rho^*\alpha)+\jmath_*(\eta\cap \jmath^*\alpha).$$
Hence we have
$$%\beq\label{h7} 
s^!_\sigma(\xi\cap \alpha)=(\eta\cap \jmath^*\alpha)\cap e(E|_\fZ,s|_\fZ)-{\rho_S}_*((\zeta\cap \rho^*\alpha)\cap e(F,\tilde{s})\cap e(\sO_{\tX}(\tS),t_{\tS}) ).
$$
By \eqref{h5}, the above line equals
\beq\label{h8}
(\eta\cap e(E|_\fZ,s|_\fZ))\cap \alpha|_S-{\rho_S}_*(\zeta\cap e(F,\tilde{s})\cap e(\sO_{\tX}(\tS),t_{\tS})\cap \rho_S^*\alpha|_S ).
\eeq
By the projection formula again and \eqref{h6}, \eqref{h8} equals
\[
(\eta\cap e(E|_\fZ,s|_\fZ))\cap \alpha|_S-{\rho_S}_*(\zeta\cap e(F,\tilde{s})\cap e(\sO_{\tX}(\tS),t_{\tS}))\cap \alpha|_S =s^!_\sigma(\xi)\cap \alpha|_S.
\]
This proves the proposition.
\end{proof}

\begin{prop}\label{h9}
Let $f:Y'\to Y$ be a proper morphism of normal \DM stacks. Let $E', s', \sigma', X', \fZ', S'$ etc be the pullbacks of $E, s, \sigma, X, \fZ, S$ etc by $f$. Let $p:S'\to S$ denote the restriction of $f$ to $S'$.   
Let $\xi\in \ih_i(Y)$. Suppose there exists $\xi'\in \ih_i(Y')$ such that $f_*\epsilon_{Y'}(\xi')=\epsilon_Y(\xi).$ Then we have
$$p_*{s'}^!_{\sigma'}(\xi')=s^!_\sigma(\xi)\in H_{i-2r}(S).$$
\end{prop}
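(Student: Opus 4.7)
The plan is to compute $s^!_\sigma(\xi)$ using a $\sigma$-regularizing morphism obtained by base change from a $\sigma'$-regularizing morphism on $Y'$, and then to deduce the identity by the projection formula together with the naturality of the Euler class with section.

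First, I pick a $\sigma$-regularizing morphism $\rho:\tY\to Y$ (for example the blowup of $Y$ along $\sigma(E)$) and form the fiber square
\[\xymatrix{
\tY'\ar[r]^{\tilde f}\ar[d]_{\rho'} & \tY\ar[d]^\rho\\
Y'\ar[r]^{f} & Y.
}\]
Since properness, surjectivity of $\tilde\sigma$, and the isomorphism property outside $\fZ$ are preserved by base change, $\rho'$ is $\sigma'$-regularizing with exceptional divisor $\tfZ'=\tilde f^{-1}(\tfZ)$ and kernel $F'=\tilde f^*F$; moreover $\tilde s'=\tilde f^*\tilde s$, $\tS'=\tilde f^{-1}(\tS)$, and the restriction $\tilde f_S:\tS'\to\tS$ satisfies $\rho_S\circ\tilde f_S=p\circ\rho'_S$.

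Second, I invoke \cite[Lemma 2.2]{KLq} to choose a decomposition
\[\epsilon_{Y'}(\xi')=\rho'_*\zeta'+\jmath'_*\eta',\qquad \epsilon_{Y'}(\xi')|_{Y'-\fZ'}=\zeta'|_{\tY'-\tfZ'},\]
with $\zeta'\in H_i(\tY')$ and $\eta'\in H_i(\fZ')$. Pushing forward by $f$ and using $f\circ\rho'=\rho\circ\tilde f$ and $f\circ\jmath'=\jmath\circ p_\fZ$, where $p_\fZ:\fZ'\to\fZ$ is the restriction of $f$, I obtain a valid decomposition
\[\epsilon_Y(\xi)=\rho_*\zeta+\jmath_*\eta,\qquad \zeta=\tilde f_*\zeta',\quad \eta=p_{\fZ *}\eta',\]
the restriction condition for $(\zeta,\eta)$ being the pushforward of the one for $(\zeta',\eta')$ along $f|_{Y'-\fZ'}$.

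Third, I plug each decomposition into formula \eqref{a19} and compare. By naturality of the Euler class with section, $\tilde f^*e(F,\tilde s)=e(F',\tilde s')$, $\tilde f_X^*e(\sO_{\tX}(\tS),t_{\tS})=e(\sO_{\tX'}(\tS'),t_{\tS'})$, and $p_\fZ^*e(E|_\fZ,s|_\fZ)=e(E'|_{\fZ'},s'|_{\fZ'})$. Repeated use of the projection formula \eqref{h4} then yields
\[\rho_{S*}\bl\zeta\cap e(F,\tilde s)\cap e(\sO_{\tX}(\tS),t_{\tS})\br=p_*\rho'_{S*}\bl\zeta'\cap e(F',\tilde s')\cap e(\sO_{\tX'}(\tS'),t_{\tS'})\br,\]
\[\eta\cap e(E|_\fZ,s|_\fZ)=p_*\bl\eta'\cap e(E'|_{\fZ'},s'|_{\fZ'})\br.\]
Subtracting the first from the second (as in \eqref{a19}) gives exactly $s^!_\sigma(\xi)=p_*{s'}^!_{\sigma'}(\xi')$, and the answer is independent of all the choices involved by the invariance result of \cite[\S3]{KLq}.

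The main technical obstacle is the cartesian compatibility of the regularizing data on $Y$ and $Y'$ — namely that $\tfZ'$, $F'$, $\tilde s'$, and $\tS'$ really are the pullbacks of $\tfZ$, $F$, $\tilde s$, $\tS$; once these identifications are checked, everything reduces to mechanical bookkeeping with the projection formula and naturality of Euler classes.
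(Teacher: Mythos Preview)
Your proof is correct and follows essentially the same approach as the paper: both choose a $\sigma$-regularizing morphism $\rho:\tY\to Y$, base change it to obtain a $\sigma'$-regularizing morphism $\rho':\tY'\to Y'$, decompose $\epsilon_{Y'}(\xi')$ on the $Y'$ side, push the decomposition forward by $f$ to obtain a valid decomposition for $\epsilon_Y(\xi)$, and then compare the two instances of \eqref{a19} via the projection formula and naturality of the localized Euler classes. Your explicit remark that $\rho'$ is $\sigma'$-regularizing (with $\tfZ'$, $F'$, $\tilde s'$, $\tS'$ arising as pullbacks) is exactly the ``cartesian compatibility'' the paper uses implicitly.
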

\begin{proof}
Let $\rho:\tY\to Y$ be a $\sigma$-regularizing birational morphism so that $\sigma$ lifts to the surjective homomorphism 
$\tilde{\sigma}:\tE=\rho^*E\to \sO_{\tY}(-\tfZ)$ over $\tY$ with kernel $F$.
Consider the fiber product
\[\xymatrix{
\tY'\ar[r]^{\tilde{f}} \ar[d]_{\rho'} & \tY\ar[d]^\rho\\
Y'\ar[r]^f & Y
}\]
Then $\tilde{\sigma}$ lifts to a surjective homomorphism
$\tilde{\sigma}':\tE'={\rho'}^*E'\to \sO_{\tY'}(-\tfZ')$ where $\tfZ'=\tfZ\times_{\tY}\tY'.$

For ${s'}^!_{\sigma'}(\xi')$, we pick $\zeta'\in H_i(\tY')$ and $\eta\in H_i(\fZ')$ such that
\beq\label{h10} \epsilon_{Y'}(\xi')=\rho'_*\zeta'+\jmath'_*\eta',\quad \epsilon_{Y'}(\xi')|_{Y'-\fZ'}=\zeta'|_{\tY'-\tfZ'}  \eeq
where $\jmath':\fZ'\to Y'$ denotes the inclusion. 
By the definition of ${s'}^!_{\sigma'}$, we have
\beq\label{h11}
{s'}^!_{\sigma'}(\xi')=\eta'\cap e(E'|_{\fZ'},s'|_{\fZ'})-{\rho_{S'}}_*(\zeta'\cap e(F',\tilde{s}')\cap e(\sO_{\tX'}(\tS'),t_{\tS'}) )
\eeq

By applying $f_*$ to \eqref{h10}, we have
$$\epsilon_Y(\xi)=f_*\epsilon_{Y'}(\xi')=f_*\rho'_*\zeta'+f_*\jmath'_*\eta' =\rho_*(\tilde{f}_*(\zeta'))+{\jmath}_*({f_{\fZ}}_*(\eta'))$$
where $f_\fZ:\fZ'\to \fZ$ denotes the restriction of $f$ to $\fZ'=\fZ\times_YY'$. 
Moreover, $\tilde{f}_*(\zeta')|_{\tY-\tfZ}=\tilde{f}_*\epsilon_{Y'}(\xi')|_{Y-\fZ}=\epsilon_Y(\xi)|_{Y-\fZ}$ by \eqref{h10}. Hence, we have
\beq\label{h12}
{s}^!_{\sigma}(\xi)={f_{\fZ}}_*(\eta')\cap e(E|_{\fZ},s|_{\fZ})-{\rho_{S}}_*(\tilde{f}_*(\zeta')\cap e(F,\tilde{s})\cap e(\sO_{\tX}(\tS),t_{\tS}) ).
\eeq
By the projection formula \eqref{h4}, ${f_{\fZ}}_*(\eta')\cap e(E|_{\fZ},s|_{\fZ})=p_*(\eta'\cap e(E'|_{\fZ'},s|_{\fZ'}))$ and 
%$${\rho_{S}}_*(\tilde{f}_*(\zeta')\cap e(F,\tilde{s})\cap e(\sO_{\tX}(\tS),t_{\tS}) )=
%{\rho_{S}}_*\tilde{p}_*(\zeta'\cap e(F',\tilde{s}')\cap e(\sO_{\tX'}(\tS'),t_{\tS'}) )$$
%$$=p_*{\rho_{S'}}_*(\zeta'\cap e(F',\tilde{s}')\cap e(\sO_{\tX'}(\tS'),t_{\tS'}) )$$
\begin{align*}
{\rho_{S}}_*(\tilde{f}_*(\zeta')\cap & e(F,\tilde{s})\cap e(\sO_{\tX}(\tS),t_{\tS}) )\\
&={\rho_{S}}_*\tilde{p}_*(\zeta'\cap e(F',\tilde{s}')\cap e(\sO_{\tX'}(\tS'),t_{\tS'}) )\\
&=p_*{\rho_{S'}}_*(\zeta'\cap e(F',\tilde{s}')\cap e(\sO_{\tX'}(\tS'),t_{\tS'}) ),
\end{align*}
where $\tilde{p}$ comes from the fiber diagram
\[\xymatrix{
\tS'\ar[r]^{\tilde{p}} \ar[d]_{\rho_{S'}} & \tS\ar[d]^{\rho_S}\\
S'\ar[r]^p & S.}\]
Hence by \eqref{h11}, \eqref{h12} equals 
$$p_*\left(\eta'\cap e(E'|_{\fZ'},s|_{\fZ'})-{\rho_{S'}}_*(\zeta'\cap e(F',\tilde{s}')\cap e(\sO_{\tX'}(\tS'),t_{\tS'}) )\right)=p_*{s'}^!_{\sigma'}(\xi').$$
This proves the proposition.
\end{proof}

\begin{theo}\label{h0}
Under Assumption \ref{aa}, we let $g:Z'\to Z$ be a birational proper morphism with $Z'$ smooth. Consider the fiber product
\beq\label{a13}\xymatrix{ 
Y'\ar[r]^f\ar[d]_{\bq'} & Y\ar[d]^{\bq}\\
Z'\ar[r]^g & Z}\eeq
so that $Y'$ is smooth. Let $E', s', \sigma', X', S'$ etc be the pullbacks of $E, s, \sigma, X, S$ etc by $f$. 
Let $p:S'\to S$ denote the restriction of $f$ to $S'$ and $q:S'\to Z'$ be the restriction of $\bq'$ to $S'$. 
Then \eqref{ab} holds and we have a commutative diagram
\beq\label{a0}\xymatrix{
H_*(Z')\ar[d]_{g_*} & H^*(Z')\ar[l]_{[Z']\cap }^{\cong}\ar[r]^{q^*} & H^*(S')\ar[rr]^{[X']\virt\lloc\cap} &&H_*(S')\ar[d]^{p_*}\\
H_*(Z) &\ih_*(Z)\ar[l]_{\epsilon_Z}\ar[r]^{\bq^*} &\ih_*(Y)\ar[rr]^{s^!_{\sigma}} &&H_*(S)
}\eeq
where by Corollary \ref{g24}, the virtual cycle for $X'$ is 
\beq\label{ac} [X']\virt\lloc ={s'}^!_{\sigma'}[Y']=[Y']\cap e_{\sigma'}(E',s')\in H_*(S').\eeq 
%In other words, \eqref{ab} holds.
%\beq\label{ab} p_*([X']\virt\lloc\cap q^*\alpha)=s^!_\sigma\bq^*(v), \quad \text{whenever}\quad\epsilon_Z(v)=g_*([Z']\cap \alpha).\eeq
\end{theo}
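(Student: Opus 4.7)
The plan is to chase a class $\alpha\in H^*(Z')$ and the corresponding $v\in \ih_*(Z)$ with $\epsilon_Z(v)=g_*([Z']\cap\alpha)$ through the diagram \eqref{a0}, using Corollary \ref{g24} together with Propositions \ref{h1} and \ref{h9} as the three main tools. Commutativity of \eqref{a0} is exactly the identity \eqref{ab}, and this is all that needs to be proved: the stabilization map \eqref{g20} is a common right factor of $\Omega_{g,n}$ and $\Omega'_{g,n}$, so Theorem \ref{g30} reduces to Theorem \ref{h0}.

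First I would rewrite the top path. By Corollary \ref{g24}, $[X']\virt\lloc={s'}^!_{\sigma'}[Y']$. Since $Y'$ is smooth, $[Y']\in \ih_*(Y')$, and Proposition \ref{h1} applied on $Y'$ to the cohomology class $\bq'^*\alpha\in H^*(Y')$ gives
\[ [X']\virt\lloc \cap q^*\alpha \;=\; {s'}^!_{\sigma'}([Y']) \cap (\bq'^*\alpha)\big|_{S'} \;=\; {s'}^!_{\sigma'}\bl [Y']\cap \bq'^*\alpha \br, \]
where $(\bq'^*\alpha)|_{S'}=q^*\alpha$ by functoriality of cohomology pullback along $S'\hookrightarrow Y'\to Z'$. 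Pushing forward by $p$ yields
\[ p_*\bl [X']\virt\lloc \cap q^*\alpha \br \;=\; p_*\, {s'}^!_{\sigma'}\bl [Y']\cap \bq'^*\alpha \br. \]

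Next I would apply Proposition \ref{h9} with $\xi'=[Y']\cap \bq'^*\alpha\in \ih_*(Y')$ and candidate $\xi=\bq^*v\in\ih_*(Y)$. The fiber square \eqref{a13} has $g$ (hence $f$) proper, while $\bq$ and $\bq'$ are smooth with $[Y']=\bq'^*[Z']$, so compatibility of smooth pullback with cap product gives $[Y']\cap \bq'^*\alpha=\bq'^*\bl [Z']\cap \alpha\br$. Proper base change in Borel-Moore homology then yields
\[ f_*\bl [Y']\cap \bq'^*\alpha\br = f_*\bq'^*\bl [Z']\cap\alpha\br = \bq^* g_*\bl [Z']\cap\alpha\br = \bq^*\epsilon_Z(v) = \epsilon_Y(\bq^*v), \]
the last equality being the commutation of placid (smooth) pullback with the natural transformation $\epsilon:\ih_*\to H_*$. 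The hypothesis of Proposition \ref{h9} is thereby verified, and it delivers
\[ p_*\, {s'}^!_{\sigma'}\bl [Y']\cap \bq'^*\alpha \br \;=\; s^!_\sigma(\bq^*v), \]
which equals the bottom path of \eqref{a0} and proves \eqref{ab}.

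The main obstacle is the careful bookkeeping of the smooth-pullback and proper-base-change identities across the Borel-Moore and intersection-homology formalisms: the equalities $[Y']=\bq'^*[Z']$, the compatibility of cap product with $\bq'^*$, proper base change $f_*\bq'^*=\bq^*g_*$, and the fact that $\bq^*$ commutes with $\epsilon$. These are standard properties from Goresky-MacPherson theory (as surveyed in \cite[\S2]{KLq}), but they must be invoked in the precise normalizations fixed there so that the three ingredients compose cleanly. Once that bookkeeping is settled, the argument is a purely formal composition of Corollary \ref{g24} and Propositions \ref{h1} and \ref{h9}.
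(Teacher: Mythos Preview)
Your proposal is correct and follows essentially the same route as the paper: both arguments apply Corollary \ref{g24} and Proposition \ref{h1} to rewrite the top path as $p_*{s'}^!_{\sigma'}\bq'^*([Z']\cap\alpha)$, then use proper base change for the fiber square \eqref{a13} together with the compatibility of smooth pullback with $\epsilon$ to verify the hypothesis of Proposition \ref{h9} and conclude. The only difference is expository: you spell out the intermediate identities $[Y']\cap\bq'^*\alpha=\bq'^*([Z']\cap\alpha)$ and $f_*\bq'^*=\bq^*g_*$ more explicitly than the paper does.
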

\begin{proof}
By \eqref{ac} and Proposition \ref{h1}, we have
\beq\label{a14}
p_*([X']\virt\lloc\cap q^*\alpha)=p_*({s'}^!_{\sigma'}[Y']\cap q^*\alpha)=p_*{s'}^!_{\sigma'}([Y']\cap {\bq'}^*\alpha)=p_*{s'}^!_{\sigma'}{\bq'}^*([Z']\cap \alpha)\eeq
since $Y', Z', \bq'$ are smooth. 
From the fiber diagram \eqref{a13}, we have 
$$f_*{\bq'}^*([Z']\cap \alpha)=\bq^*g_*([Z']\cap \alpha)=\bq^*\epsilon_Z(v)=\epsilon_Y(\bq^*v)$$
since $\bq$ is smooth. 
Therefore, by Proposition \ref{h9}, \eqref{a14} equals $s^!_\sigma\bq^*v$ as desired. 
\end{proof}

%\begin{rema}
%In \cite{KLgl}, the cosection localized Gysin map and the cohomological field theories 
%by intersection homology in \cite{KLq}
%will be generalized to the setting of (broad) gauged linear sigma models (cf. \cite{FJRgl}). 
%Likewise, the results in this paper will be generalized to gauged linear sigma models in a subsequent paper. 
%\end{rema}

\bibliographystyle{amsplain}

\begin{thebibliography}{99}

%\bibitem{BFM} P. Baum, W. Fulton and R. MacPherson. {\em Riemann-Roch for singular varieties.} Inst. Hautes Études Sci. Publ. Math. No. \textbf{45} (1975), 101-–145.
\bibitem{BeFa} K. Behrend and B. Fantechi. {\em The intrinsic normal cone.} Invent. Math. \textbf{128} (1997), 45--88.  
\bibitem{BBD} A. Beilinson, J. Bernstein and P. Deligne. {\em Faisceaux pervers.} Analysis and topology on singular spaces, I (Luminy, 1981), 5–-171, Astérisque, \textbf{100}, Soc. Math. France, Paris, 1982. 
\bibitem{Borel} A. Borel et al. {\em Intersection cohomology.}  Notes on the seminar held at the University of Bern, Bern, 1983.
\bibitem{Bred} Bredon. {\em Sheaf theory.} Graduate Texts in Mathematics, \textbf{170}. Springer-Verlag, New York, 1997.
%\bibitem{CKL} H.-L. Chang, Y.-H. Kiem and J. Li. {\em Torus localization and wall crossing for cosection localized virtual cycles.} Adv. Math. \textbf{308} (2017), 964-–986.
\bibitem{CLL} H.-L. Chang, J. Li and W.-P. Li. {\em Witten's top Chern class via cosection localization.} Invent. Math. \textbf{200} (2015), no. 3, 1015-–1063. 
\bibitem{Chio} A. Chiodo. {The Witten top Chern class via K-theory.} J. Algebraic Geom. \textbf{15} (2006), no. 4, 681–-707.  
\bibitem{FJR} H. Fan, T. Jarvis and Y. Ruan. {\em The Witten equation, mirror symmetry, and quantum singularity theory.} 
Ann. of Math. (2) \textbf{178} (2013), no. 1, 1-–106. 
\bibitem{FJRgl} H. Fan, T. Jarvis and Y. Ruan. {\em A mathematical theory of the gauged linear sigma model.}
Geom. Topol. \textbf{22}, No. 1 (2018), 235-303.
\bibitem{Fulton} W. Fulton. {\em Intersection theory.} Second edition. Ergebnisse der Mathematik und ihrer Grenzgebiete. 3. Folge. A Series of Modern Surveys in Mathematics, 2. Springer-Verlag, Berlin, 1998.
%\bibitem{FulY} W. Fulton. {\em Young tableaux.} London Mathematical Society Student Texts, 35. Cambridge University Press, Cambridge, 1997. 
%\bibitem{GM80} M. Goresky and R. MacPherson. {\em Intersection cohomology. I.}  Topology. \textbf{19} (1980), 135–-162.
\bibitem{GM83} M. Goresky and R. MacPherson. {\em Intersection cohomology. II.}  Invent. Math. \textbf{72} (1983), no. 1, 77–-129.
%\bibitem{GM85} M. Goresky and R. MacPherson. {\em Lefschetz fixed point theorem for intersection homology.} Comment. Math. Helv. \textbf{60}, 366-–391 (1985).
\bibitem{Iver} B. Iverson. {\em Cohomology of sheaves.}  Universitext. Springer-Verlag, Berlin, 1986.
%\bibitem{Kie} Y.-H. Kiem. {\em Intersection cohomology of quotients of nonsingular varieties.} Invent. math. \textbf{155}, 163–202 (2004).
\bibitem{KLc} Y.-H. Kiem and J. Li. {\em Localizing virtual cycles by cosections.} J. Amer. Math. Soc. \textbf{26} (2013), no. 4, 1025-–1050.
\bibitem{KLq} Y.-H. Kiem and J. Li. {\em Quantum singularity theories via cosection localization.} Preprint. 
%\bibitem{KLgl} Y.-H. Kiem and J. Li. {\em Curve counting in gauged linear sigma models.} In preparation.
%\bibitem{KW} F. Kirwan and J. Woolf. {\em An introduction to intersection homology theory.} Second edition. Chapman \& Hall/CRC, Boca Raton, FL, 2006. 
%\bibitem{LiTi} J. Li and G. Tian. {Virtual cycles.}
\bibitem{Massey} D. Massey. {\em The Sebastiani-Thom isomorphism in the derived category.} 
Compositio Math. \textbf{125} (2001), no. 3, 353–-362. 
\bibitem{PV0} A. Polishchuk and A. Vaintrob. {\em Algebraic construction of Witten's top Chern class.} Advances in algebraic geometry motivated by physics (Lowell, MA, 2000), 229–-249, Contemp. Math., 276, Amer. Math. Soc., Providence, RI, 2001.
\bibitem{PV} A. Polishchuk and A. Vaintrob. {\em Matrix factorizations and cohomological field theories.} J. Reine Angew. Math. \textbf{714} (2016), 1-–122.
\bibitem{Voisin} C. Voisin. {\em Hodge theory and complex algebraic geometry. II.} Cambridge Studies in Advanced Mathematics, 77. Cambridge University Press, Cambridge, 2007.
\bibitem{Witt} E. Witten. {\em Algebraic geometry associated with matrix models of two-dimensional gravity.}
Topological methods in modern mathematics (Stony Brook, NY, 1991), 235-–269, Publish or Perish, Houston, TX, 1993. 

\end{thebibliography}

\end{document}